\newcommand\BibTeX{{\rmfamily B\kern-.05em \textsc{i\kern-.025em b}\kern-.08em
T\kern-.1667em\lower.7ex\hbox{E}\kern-.125emX}}
\DeclareSymbolFont{bbold}{U}{bbold}{m}{n}
\DeclareSymbolFontAlphabet{\mathbbold}{bbold}
\newcommand{\onev}{\mathbbold{1}}
\newtheorem{theorem}{Theorem}[section]
\newtheorem{corollary}[theorem]{Corollary}
\newtheorem{proposition}[theorem]{Proposition}
\newtheorem{lemma}[theorem]{Lemma}
\newtheorem{definition}[theorem]{Definition}
\newtheorem{example}[theorem]{Example}
\newtheorem{problem}[theorem]{Problem}
\newtheorem{remark}[theorem]{Remark}
\newcommand{\norm}[1]{\lVert #1 \rVert}
\newcommand{\oneto}[1]{[#1]}
\DeclareMathOperator*{\minimize}{minimize}
\DeclareMathOperator*{\subjectto}{subject\ to}
\DeclareMathOperator{\tr}{tr}
\let\Pr\undefined\DeclareMathOperator{\Pr}{Pr}
\begin{document}

\runningheads{M.~Ogura, A.~Cetinkaya, T.~Hayakawa, and V.~M.~Preciado}{State Feedback Control with Hidden-Markov Mode Observation}

\title{State feedback control of Markov jump linear systems\\with
hidden-Markov mode observation}

\author{Masaki~Ogura\affil{1}\corrauth
, Ahmet~Cetinkaya\affil{2}, Tomohisa~Hayakawa\affil{2}, and  Victor~M.~Preciado\affil{1}
}

\address{\affilnum{1}Department of Electrical and Systems Engineering, 
University of Pennsylvania, Philadelphia, PA, 19104, USA.\break
\affilnum{2}Department of Mechanical
and Environmental Informatics, Tokyo Institute of Technology,
Tokyo 152-8552, Japan. 
}

\corraddr{Department of Electrical and Systems Engineering, 
University of Pennsylvania, Philadelphia, PA, 19104, USA. Email:  ogura@seas.upenn.edu}

\cgsn{NSF}{CNS-1302222, IIS-1447470.}

\begin{abstract}
In this paper, we study state-feedback control of Markov jump linear systems with partial information. In particular, we assume that the controller can only access the mode signals according to a hidden-Markov observation process. Our formulation generalizes various relevant cases previously studied in the literature on Markov jump linear systems, such as the cases with perfect information, no information, and cluster observations of the mode signals. In this context, we propose a Linear Matrix Inequalities (LMI) formulation to design feedback control laws for (stochastic) stabilization, $H_2$, and $H_\infty$ control of discrete-time Markov jump linear systems under hidden-Markovian observations of the mode signals. We conclude by illustrating our results with some numerical examples.
\end{abstract}

\keywords{Markov jump linear systems; robust control; uncertain observation; linear matrix inequalities; hidden-Markov processes.}

\maketitle

\section{Introduction}

Markov jump linear systems \cite{Costa2005} are an important class of switched
systems in which the \emph{mode signal}, responsible for controlling the switch
among dynamic modes, is modeled by a time-homogeneous Markov process. This type
of systems has been widely used in multiple applications, such as
robotics~\cite{Siqueira2004,Vargas2013a}, economics~\cite{Blair1975}, networked
control~\cite{Hespanha2007a}, and epidemiology~\cite{Ogura2014i}. Solutions to
standard  controller synthesis problems  for Markov jump linear systems, such as
state-feedback stabilization, quadratic optimal control, $H_2$~optimal control,
and $H_\infty$~optimal control~(see, e.g., the monograph~\cite{Costa2005}), can
be found in the literature. These works, however, are based on the unrealistic
assumption that the controller has full knowledge about the mode signal at any
time instant.

To overcome this limitation, several papers investigate the effect of limited
and/or uncertain knowledge about the mode signal. For example, the authors
in~\cite{DoVal2002} studied $H_2$~control of discrete-time Markov jump linear
systems when the state space of the mode signal is partitioned into subsets,
called \emph{clusters}, and the controller only knows in which cluster the mode
signal is at a given time. Similar studies in the context of $H_\infty$~control
can be found in \cite{Goncalves2012,Fioravanti2014}. In the extreme case of
having a single mode cluster (in other words, when one cannot observe the mode),
the authors in~\cite{Vargas2013,Vargas2015} investigated quadratic optimal
control problems. Most of the above works can be studied in a framework based on
random and uncertain mode observations (see \cite{Costa2014b} for the
description of this framework in the context of $H_2$~control). In a
complementary line of work, we find some papers assuming that the mode signal
can only be observed at particular sampling times, instead of at any time
instant. In this direction, we find in the literature a variety of random
sampling strategies of the mode signal. The authors in~\cite{Cetinkaya2014b}
designed almost-surely stabilizing state-feedback gains when the sampling times
follow a renewal process. Similarly, the authors
in~\cite{Cetinkaya2013a,Cetinkaya2014} derived stabilizing state-feedback gains
using Lyapunov-like functions under periodic observations.

In this paper, we propose a framework to design state-feedback controllers for
discrete-time Markov jump linear systems assuming that the mode signal can only
be observed when a Markov chain (different than the one describing the mode
signal) visits a particular subset of its state space. We call this observation
process \emph{hidden-Markov}, due to its similitude with hidden-Markov
processes~\cite{Ephraim2002}. We show how hidden-Markov observation processes
generalize many relevant cases previously studied in the literature, such as
those
in~\cite{DoVal2002,Goncalves2012,Cetinkaya2013a,Cetinkaya2014,Cetinkaya2014b,Costa2014b}. In this context, we propose a Linear Matrix Inequalities (LMI) formulation to design feedback control laws for (stochastic) stabilization, $H_2$, and $H_\infty$ control of discrete-time Markov jump linear systems under hidden-Markov observations of the mode signal. It is important to remark that, since the observation process is hidden-Markovian, existing control synthesis methods for Markov jump linear systems, such as those in~\cite{Costa2005,DoVal2002,Goncalves2012}, do not apply to our case.

The paper is organized as follows. In Section~\ref{sec:PrbFormulation}, we
formulate the state-feedback control problem for Markov jump linear systems with
hidden\nobreakdash-Markovian observations of the mode signal. We show in
Section~\ref{sec:analysis} that the resulting closed-loop system can be reduced
to a standard Markov jump linear system by embedding the (possibly
non-Markovian) stochastic processes relevant to the controller into an extended
Markov chain. In Section~\ref{sec:design}, we derive an LMI formulation to
design state-feedback gains for stabilization, $H_2$, and $H_\infty$ control
problems. Finally, in Section~\ref{sec:example}, we illustrate our results with
some numerical examples.

\subsection*{Notation}
The notation used in this paper is standard. Let $\mathbb{N}$ denote the
set of nonnegative integers. Let $\mathbb{R}^n$ and $\mathbb{R}^{n\times
m}$ denote the vector spaces of real $n$-vectors and $n\times m$ matrices,
respectively. By $\norm{\cdot}$, we denote the Euclidean norm
on~$\mathbb{R}^n$. $\Pr(\cdot)$ will be used to denote the probability of
an event. The probability of an event conditional on another event
$\mathcal A$ is denoted by $\Pr(\cdot \mid \mathcal A)$. Expectations are
denoted by $E[\cdot]$. For a positive integer $N$, we define the set $[N] =
\{1, \dotsc, N\}$. For a positive integer~$T$ and an integer $k$, define
$\lfloor k \rfloor_T$ as the unique integer in $\{0, \dotsc, T-1\}$ such
that $k - \lfloor k\rfloor_T$ is an integer multiple of $T$. When a real
symmetric matrix $A$ is positive (resp., negative) definite, we write $A>0$
(resp., $A<0$). The notations $A\geq 0$ and $A\leq 0$ are then understood
in the obvious way. For sets of matrices $A = \{A_\lambda\}_{\lambda \in
\Lambda} \subset \mathbb{R}^{n\times m}$ and $B = \{B_\lambda\}_{\lambda
\in \Lambda} \subset \mathbb{R}^{m\times \ell}$ sharing the same index set
$\Lambda$, we define another set of matrices $AB = \{A_\lambda
B_\lambda\}_{\lambda \in \Lambda} \subset \mathbb{R}^{n\times \ell}$. The
symbol~$\star$ will be used to denote the symmetric blocks of partitioned
symmetric matrices. Finally, indicator functions are denoted by
$\onev(\cdot)$.

\section{Problem formulation}\label{sec:PrbFormulation}

In this section, we formulate the problems under study. Let
$n$, $m$, $q$, $\ell$, and~$N$ be positive integers. For each $i\in [N]$, let
$A_i \in \mathbb{R}^{n\times n}$, $B_i\in \mathbb{R}^{m\times n}$, $C_i \in
\mathbb{R}^{\ell \times n}$, $D_i\in \mathbb{R}^{\ell \times m}$, and
\mbox{$E_i\in \mathbb{R}^{n\times q}$}.  Also, let $r = \{r(k) \}_{k=0}^\infty$
be the time-homogeneous Markov chain taking values in~$[N]$ and having the
transition probability matrix~$P\in \mathbb{R}^{N\times N}$. Consider the
Markov jump linear system~\cite{Costa2005}:
\begin{equation*}
\Sigma:\begin{cases}
\begin{aligned}
x(k+1) &= A_{r(k)} x(k) + B_{r(k)}u(k) + E_{r(k)}w(k),
\\
z(k)   &= C_{r(k)}x(k) + D_{r(k)}u(k).
\end{aligned}
\end{cases}
\end{equation*}
We call $x$ and $r$ the state and the mode of $\Sigma$, respectively. The
signal $w$ represents an exogenous disturbance, $u$ is the control input,
and $z$ is the measured signal. The initial conditions are denoted by $x(0) = x_0$
and $r(0) = r_0$. We will assume that $x_0$ and $r_0$ are either deterministic
constants or random variables, depending on the particular control problems considered.

\subsection{State-feedback control with hidden-mode observation}

In this paper, we consider the situation where the controller cannot measure the
mode signal at every time instant. To study this case, we model the times at
which the controller can observe the mode by the stochastic process $t =
\{t_i\}_{i=0}^\infty$ taking values in $\mathbb{N}\cup \{\infty\}$. We call $t$
the \emph{observation process} and each $t_i$ an \emph{observation time}. For
each $i$, we assume either $t_i<t_{i+1}$ or $t_i = t_{i+1} = \infty$. It is understood that, if $t_i < t_{i+1} = \infty$, then no observation
will be performed after time~$t_i$.

In this paper, we focus on the following class of observation processes:

\begin{definition} \label{Def:HiddenMarkov}
We say that an observation process $t$ is \emph{hidden-Markov}\footnote{We
adopt the terminology ``hidden-Markov'' because the process
$\{f(s(k))\}_{k\geq 0}$ characterizing the observation process~$t$ is a
hidden\nobreakdash-Markov process~\cite{Ephraim2002}.} if there exist an $M\in
\mathbb{N}$, a Markov chain \mbox{$s = \{s(k)\}_{k\geq 0}$} taking values
in $[M]$ (independent of the mode $r$), and a function~\mbox{$f \colon [M] \to
\{0, 1\}$} such that
\begin{equation*} 
t_0 = \min \{k\geq 0 : f(s(k)) = 1\}
\end{equation*}
and, for every $i\geq 0$, 
\begin{equation*} 
t_{i+1} = \min\{k > t_i : f(s(k)) = 1\}, 
\end{equation*}
where the minimum of the empty set is understood to be $\infty$. 
\end{definition}

For example, if the image of $f$ equals the set $\{1\}$, then the controller
observes the mode at all time instants. On the other hand, if $f$ maps into
$\{0\}$, then the controller never observes the mode signal. In fact, the class
of hidden-Markov observation processes contains many other interesting examples
as will be seen below. Throughout the paper, we denote the transition
probability matrix of $s$ by~$Q\in\mathbb{R}^{M\times M}$. In what follows, we
provide three particular examples that can be formulated as hidden-Markovian
observation processes:

\begin{example}[Gilbert-Elliot channel]\label{ex:GEchannel}
Consider the case where the controller observes the mode through a
Gilbert-Elliot channel~\cite{Gilbert1960}. This channel has two possible states:
the good (G) and bad (B) states. When the channel is at the G state, it
transmits the mode signal to the controller; in contrast, when it is at state B,
it does not transmit. This channel switches its state according to a Markov
chain, defined as follows. Let $p, q\in [0, 1]$ be the transition probabilities
from G to B and B to G, respectively. We can formulate this channel as a
hidden-Markovian observation process (Definition \ref{Def:HiddenMarkov}) using
the following parameters:
\begin{equation*}
M=2,\quad 
Q = \begin{bmatrix}
1 - p & p\\
q & 1 - q
\end{bmatrix},\quad
f(\lambda)=
\begin{cases}
1, & \text{if $\lambda = 1$}, 
\\
0, & \text{if $\lambda = 2$}.
\end{cases}
\end{equation*}
\end{example}

Our second example is closely related to the observation processes investigated in~\cite{Costa2014b}:

\begin{example}[{Observations with independent and identically distributed failures}]\label{ex:IID}
Assume that, at each time instant, the controller attempts to observe the mode
signal but it fails with probability $p_f \in [0, 1]$, independently from the
observations at other time instants. This observation process can be implemented
as a hidden-Markovian observation process using the following parameters:
\begin{equation*}
M=2,\quad 
Q = \begin{bmatrix}
1-p_f & p_f\\1-p_f & p_f
\end{bmatrix},\quad
f(\lambda)=
\begin{cases}
1, & \text{if $\lambda = 1$}, 
\\
0, & \text{if $\lambda = 2$}.
\end{cases}
\end{equation*}
We remark that, under a similar problem setting, the authors in
\cite{Costa2014b} propose a framework for stochastic stabilization and $H_2$
control of Markov jump linear systems.
\end{example}

Our last example is concerned with periodic observation with failures: 

\begin{example}[Periodic observation with failures] 
Let $\ell$ be a positive integer and $p\in [0, 1]$. Define
\begin{equation*}
M = \ell+1,\quad 
Q = \left[\begin{array}{cc:ccc}
&&1\\\hdashline
&&1\\
&&&\ddots\\
&&&&1\\ \hdashline
p&1-p
\end{array}
\right]
,\quad
f(\lambda)=
\begin{cases}
1, & \text{if $\lambda = 1$}, 
\\
0, & \text{otherwise.}
\end{cases}
\end{equation*}
Then, we can see that $t_{i+1} - t_i$ is a positive integer multiple of $\ell$
with probability one and, also, $\Pr(t_{i+1} - t_i = k\ell) = (1-p)^{k-1}p$ for
all $i\geq 0$ and $k\geq 1$. The corresponding observation process describes the
situation where the controller tries to observe the mode signal every
$\ell$ time units with a probability of success $p$ for each observation.
In particular, for $p=1$, this observation process gives the periodic
case considered in~\cite{Cetinkaya2013a,Cetinkaya2014}.
\end{example}

In order to specify the behavior of the controller between two consecutive
observation times, we introduce the following processes. Given an observation
process~$t$, we define the stochastic process~$\tau = \{\tau(k)\}_{k=0}^\infty$
by
\begin{equation*}
\tau(k) = 
\begin{cases}
\max\{ t_i: t_i \leq k,\, i\geq 0\},  & \text{if }k \geq t_0, 
\\
\tau_0, &\text{otherwise,}
\end{cases}
\end{equation*}
where $\tau_0$ is an integer satisfying
\begin{equation}\label{eq:def:tau_0}
\begin{cases}
\tau_0 = 0, &\text{if $t_0 = 0$}, 
\\
\tau_0 < 0, &\text{otherwise}.
\end{cases}
\end{equation}
For each time $k$, the above defined $\tau(k)$ represents the most recent time
the controller observed the mode. We, in particular, have $\tau(t_i) = t_i$ for
every $i\geq 0$. Notice that, for $k<t_0$, we augment the process $\tau$ with a
negative integer $\tau_0$. This is because, before time $k = t_0$, no
observation is performed by the controller yet. This augmentation is not needed
if $t_0 = 0$, in which case we set $\tau_0 = 0$ as in~\eqref{eq:def:tau_0}.

We also define the stochastic process~$\sigma =
\{\sigma(k)\}_{k=0}^\infty$ taking values in $[N]$ by
\begin{equation*}
\sigma(k) = 
\begin{cases}
r(\tau(k)), & \text{if }k\geq t_0, 
\\
\sigma_0, &\text{otherwise,}
\end{cases}
\end{equation*}
where $\sigma_0$ is an element in $[N]$ satisfying
\begin{equation}\label{eq:def:sigma0}
[t_0 = 0] \Rightarrow [\sigma_0 = r_0].
\end{equation}
\begin{figure}[tb]
\vspace{.1in} \centering \includegraphics[width=7cm]{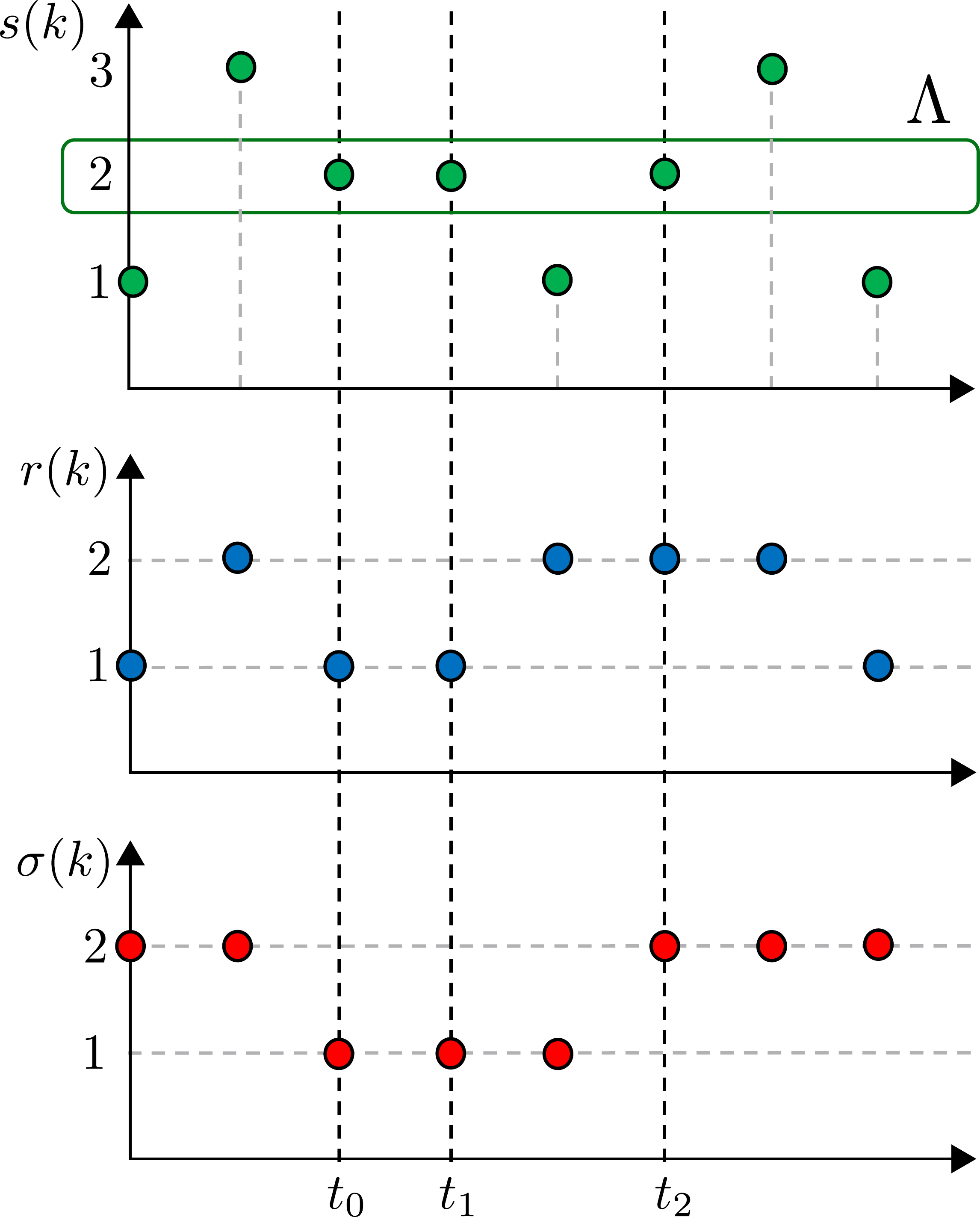}
\caption{An observation of the mode~$r$. The observation times $t_0$, $t_1$, $t_2$, $\dotsc$ are determined by the Markov chain $s$ and the function $f\colon [3] \to \{0, 1\}$ given by $f(s)=1$ if $s \in \Lambda =\{2\}$ and $f(s)=0$ otherwise. Until the first observation time~$t_0 = 2$, the most recent observation~$\sigma$
is temporarily set to $\sigma_0 = 2$.} \label{fig:observation}
\end{figure}%
For each $k$, the random variable $\sigma(k)$ represents the most-updated
information about the mode signal kept by the controller at time $k$. We again
notice that, by the same reason indicated above, $\sigma$ is augmented by an
arbitrary $\sigma_0$ before the time instant $k=t_0$ (i.e., before the first
observation is performed). As is the case for $\tau_0$, if $t_0 = 0$, then this
augmentation is not needed and thus we set $\sigma_0 = r_0$ as in
\eqref{eq:def:sigma0}. See Figure~\ref{fig:observation} for an illustration of
the stochastic processes described so far.

In what follows, we present the state-feedback control scheme studied in this
paper. We assume that the controller has an access to the following pieces of
information at each time $k\geq 0$: (\emph{i}) the state variable~$x(k)$,
(\emph{ii}) the most recent observation~$\sigma(k)$ of the mode~$r$, and
(\emph{iii})~the quantity~{$k-\tau(k)$}, which is the time elapsed since the
last observation. Specifically, the state-feedback controller under
consideration takes the form
\begin{equation}\label{eq:state-fb}
u(k) = K_{\sigma(k), \lfloor k-\tau(k) \rfloor_T + 1} x(k),
\end{equation}
where $K_{\gamma, \delta} \in \mathbb{R}^{m\times n}$ for each $\gamma \in [N]$
and $\delta \in \oneto T$. The first subindex of $K$ in \eqref{eq:state-fb},
i.e., $\sigma(k)$, allows the gain to be reset whenever the controller observes
the mode. The second subindex\footnote{Notice that we add $1$ to $\lfloor k -
\tau(k) \rfloor_T$ in the second subindex of $K$ in \eqref{eq:state-fb} to make
the index~$\delta$ of $K_{\gamma, \delta}$ start from $1$, instead of~$0$.} of
$K$ in \eqref{eq:state-fb} allows the controller to change its feedback gain
between two consecutive observation times with period $T$ as in
\cite{Cetinkaya2014}, rather than keeping them to be constant. We will later see
in Section~\ref{sec:example} that, as the period $T$ increases, the performance
of the controller can in fact improve. Throughout the paper, we will use the
notation
\begin{equation*}
\rho(k) = \lfloor k-\tau(k) \rfloor_T + 1. 
\end{equation*}
Notice that the initial condition of $\rho$ is given by
$\rho(0) = \rho_0 = \lfloor-\tau_0 \rfloor_T + 1$.

\subsection{Performance measures}

We now introduce several performance measures used to evaluate the
state-feedback control law~\eqref{eq:state-fb}. The feedback control law
\eqref{eq:state-fb} applied to $\Sigma$ yields the following closed-loop system
\begin{equation*}
\Sigma_K : \begin{cases}
\begin{aligned}
x(k+1) &= \left(A_{r(k)} + B_{r(k)} K_{\sigma(k), \rho(k)}\right) x(k) + E_{r(k)}w(k),
\\
z(k)   &= \left(C_{r(k)} + D_{r(k)} K_{\sigma(k), \rho(k)}\right)x(k).
\end{aligned}
\end{cases}
\end{equation*}
Let us introduce the compact notation
\begin{equation}\label{eq:def:bar r}
\bar r(k) = (r(k), s(k), \sigma(k), \rho(k)). 
\end{equation}
Also, define $\mathfrak X$ as the set of quadruples $(\alpha, \beta, \gamma,
\delta) \in [N]\times [M]\times [N]\times [T]$ such that, if $ f(\beta) = 1$,
then $\alpha = \gamma$ and $\delta = 1$. The set $\mathfrak X$ contains all
possible values that can be taken by the stochastic process $\bar r$. We denote
the initial condition for $\bar r$ as $\bar r(0) = \bar r_0$. We sometimes
denote the trajectories~$x$ and $z$ of $\Sigma_K$ by $x(\cdot; x_0, \bar r_0,
w)$ and $z(\cdot; x_0, \bar r_0, w)$, respectively, whenever we need to clarify
the initial conditions as well as the disturbance~$w$. We finally remark that,
by the conditions in \eqref{eq:def:tau_0} and \eqref{eq:def:sigma0}, $\bar r_0$
is determined by $r_0$, $s_0$, $\sigma_0$, and $\rho_0$ as
\begin{equation}\label{eq:def:barr_0}
\bar r_0 = \begin{cases}
(r_0, s_0, r_0, 1), &\text{if $f(s_0) = 1$}, 
\\
(r_0, s_0, \sigma_0, \rho_0), &\text{otherwise}.
\end{cases}
\end{equation}

The first performance measure under consideration is mean square stability:

\begin{definition}[Mean square stability]\label{defn:ms}
We say that $\Sigma_K$ is \emph{mean square stable} if there exist $C>0$ and
$\lambda \in [0, 1)$ such that $E[\norm{x(k)}^2] \leq C\lambda^k \norm{x_0}^2$
for all $x_0$, $\bar r_0$, and $k\geq 0$, provided $w \equiv 0$.
\end{definition}

In order to define the second performance under consideration, we first need to
introduce the space of square summable stochastic processes, as follows. Let
$\Theta_k$ be the $\sigma$-algebra generated by the random variables $\{r(k),
s(k), \dotsc, r(0), s(0)\}$. Define $\ell^2(\mathbb{R}^n)$ ($\ell^2$ for short)
as the space of $\mathbb{R}^n$\nobreakdash-valued stochastic processes $f =
\{f(k)\}_{k\geq 0}$ such that $f(k)$ is an $\mathbb{R}^n$-valued and
$\Theta_k$-measurable random variable each $k\geq 0$ and, moreover,
$\sum_{k=0}^\infty E[\norm{f(k)}^2]$ is finite. For $f\in\ell^2$, define its
$\ell^2$-norm $\norm{f}_2$ by $\norm{f}_2^2 = \sum_{k=0}^\infty
E[\norm{f(k)}^2]$. Then, we extend the definition of the $H_2$~norm of a Markov
jump linear system introduced in~\cite{DoVal2002}, as follows:

\begin{definition}[$H_2$ norm]
Assume that $\bar r_0$  follows the probability distribution $\bar \mu$. Define
the \emph{$H_2$~norm} of $\Sigma_K$ by
\begin{equation*}
\norm{\Sigma_K}_2 = \sqrt{\sum_{i=1}^q \sum_{\chi \in \mathfrak X} 
\bar \mu(\chi)
\norm{z(\cdot; 0, \chi, e_i\phi)}_2^2}, 
\end{equation*}
where $e_i$ denotes the $i$th standard unit vector in $\mathbb{R}^q$ and $\phi$
is the function defined on $\mathbb{N}$ by $\phi(0) = 1$ and $\phi(k) = 0$ for
$k\geq 1$.
\end{definition}

Our third and last performance measure is the $H_\infty$~norm. In our context,
we use the following definition, which is an extension of the one for standard
Markov jump linear systems~\cite{Seiler2003}:

\begin{definition}[$H_\infty$~norm]
Assume that $\Sigma_K$ is mean square stable and $x_0 = 0$. Define the
\emph{$H_\infty$~norm} of $\Sigma_K$ by
\begin{equation*}
\norm{\Sigma_K}_\infty = \sup_{\bar r_0 \in \mathfrak X} \sup_{w\in \ell^2(\mathbb{R}^q)\backslash \{0\}} 
\frac{\norm{z}_2}{\norm{w}_2}.
\end{equation*}
\end{definition}

Having introduced these three performance measures, we can now formulate the
problems under consideration. The first problem is concerned with stochastic
stabilization, which is stated as follows:

\begin{problem}[Stabilization]\label{prb:stbl}
Find a set of matrices $\{K_{\gamma, \delta}\}_{\gamma\in[N],
\delta\in[T]}\subset \mathbb{R}^{m\times n}$ such that $\Sigma_K$ is mean
square stable.
\end{problem}

The second problem is concerned with the stabilization of the closed-loop system
with an upper-bound on the $H_2$ norm. In this problem, we assume that the
distributions of $r_0$ and $s_0$ are given. Thus, the parameters to be designed
are feedback gains $K$ and the distribution $\nu$ of the pair $(\sigma_0,
\rho_0)$:

\begin{problem}[$H_2$ control]\label{prb:H2}
Assume that the distributions of $r_0$ and $s_0$ are known. For a given $\gamma
> 0$, find a set of matrices $\{K_{\gamma, \delta}\}_{\gamma\in[N],
\delta\in[T]}\subset \mathbb{R}^{m\times n}$ and a distribution $\nu$ such that
$\Sigma_K$ is mean square stable and $\norm{\Sigma_K}_2 < \gamma$.
\end{problem}

\begin{remark}
Given the distribution $\mu_r$ (resp., $\mu_s$) of $r_0$ (resp., $s_0$),
using  \eqref{eq:def:barr_0} we can find $\bar \mu$ as 
\begin{equation}\label{eq:nu.lin}
\bar \mu(\chi)
=
\begin{cases}
\mu_r(\alpha)\mu_s(\beta) &  \text{if $f(\beta) = 1$}, 
\\
\mu_r(\alpha)\mu_s(\beta)\nu(\gamma, \delta) &\text{otherwise}, 
\end{cases}
\end{equation}
 for every $\chi =
(\alpha, \beta, \gamma, \delta) \in \mathfrak X$.
\end{remark}

The last problem is the stabilization of the closed-loop system with an
upper-bound on the $H_\infty$ norm:

\begin{problem}[$H_\infty$ control]\label{prb:Hinfty}
For a given $\gamma > 0$, find a set of matrices $\{K_{\gamma,
\delta}\}_{\gamma\in[N], \delta\in[T]}\subset \mathbb{R}^{m\times n}$ such that
$\Sigma_K$ is mean square stable and $\norm{\Sigma_K}_\infty < \gamma$.
\end{problem}

We remark that $\Sigma_K$ is no longer a standard Markov jump linear system, due
to the nature of the processes $\sigma$ and $\rho$. Therefore, we cannot use any
of the techniques in the literature~\cite{Costa2005,DoVal2002,Goncalves2012} to
synthesize a control law. In fact, we cannot use existing techniques even to
analyze the performance of $\Sigma_K$. Moreover, due to the generality of
hidden-Markov observation processes, we cannot use any of the results in
\cite{Cetinkaya2014b,Cetinkaya2014,Costa2014b}, recently proposed to design
state-feedback control laws for Markov jump linear systems with partial mode
observation.

\section{Analysis of closed-loop system}\label{sec:analysis}

In this section, we show how to analyze the  closed-loop system~$\Sigma_K$. In
this direction, we reduce $\Sigma_K$ to a standard Markov jump linear system by
embedding stochastic processes appearing in the closed-loop system (which are
not necessarily Markovian) into an extended Markov chain with a larger state
space. Let us begin with the following observation:

\begin{lemma}\label{lem:extendedMarkov}
The stochastic process $\bar r$ defined by \eqref{eq:def:bar r} is a
time-homogeneous Markov chain. Moreover, its transition probabilities are given
by
\begin{equation}\label{eq:bar_p}
\Pr(\bar r(k+1) = \chi' \mid \bar r(k) = \chi) 
= 
\begin{cases}
\onev(\alpha' = \gamma\,',\,\delta'=1) p_{\alpha \alpha'} q_{\beta \beta'}, 
&
\text{if } f(\beta') = 1, 
\\
\onev(\gamma\,' = \gamma,\, \lfloor\delta' - \delta -1 \rfloor_T = 0) p_{\alpha \alpha'} q_{\beta \beta'}, 
&
\text{otherwise,}
\end{cases}
\end{equation}
for all $\chi = (\alpha, \beta, \gamma, \delta)$ and {$\chi' = (\alpha', \beta',
\gamma\,', \delta')$} in $\mathfrak X$.
\end{lemma}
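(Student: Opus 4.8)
\emph{Proof proposal.} The plan is to separate the deterministic bookkeeping carried by $(\sigma,\rho)$ from the random driving supplied by $(r,s)$, and then read off the Markov property of $\bar r$ from that of $(r,s)$. Let $\mathcal{G}_k$ denote the $\sigma$-algebra generated by $\Theta_k$ together with the initial data $(\tau_0,\sigma_0,\rho_0)$; then $\tau(k)$, $\sigma(k)$, $\rho(k)$, and hence $\bar r(k)$, are all $\mathcal{G}_k$-measurable. The crux of the argument is to show that the updated pair $(\sigma(k+1),\rho(k+1))$ is a \emph{deterministic} function $g$ of only the freshly revealed $(r(k+1),s(k+1))$ and the current $(\sigma(k),\rho(k))$, i.e.\ of $(r(k+1),s(k+1))$ and $\bar r(k)$. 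Granting this, the one-step law of $\bar r$ inherits the Markovianity and time-homogeneity of $(r,s)$, and the transition probabilities can then be matched to \eqref{eq:bar_p} by inspection.

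To establish the recursion I would argue directly from Definition~\ref{Def:HiddenMarkov}, using that $k+1$ is an observation time precisely when $f(s(k+1))=1$, and splitting into two cases. If $f(s(k+1))=1$, then $\tau(k+1)=k+1$, so $\sigma(k+1)=r(\tau(k+1))=r(k+1)$ and $\rho(k+1)=\lfloor 0\rfloor_T+1=1$. If instead $f(s(k+1))=0$, then no new observation occurs, so $\tau(k+1)=\tau(k)$, giving $\sigma(k+1)=r(\tau(k))=\sigma(k)$ and $\rho(k+1)=\lfloor (k+1)-\tau(k)\rfloor_T+1$. Applying the identity $\lfloor a+1\rfloor_T=\lfloor \lfloor a\rfloor_T+1\rfloor_T$ with $a=k-\tau(k)$ and using $\lfloor k-\tau(k)\rfloor_T=\rho(k)-1$, this last expression simplifies to $\rho(k+1)=\lfloor\rho(k)\rfloor_T+1$, a function of $\rho(k)$ alone. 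In both cases $(\sigma(k+1),\rho(k+1))=g(r(k+1),s(k+1),\sigma(k),\rho(k))$, as desired.

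With the recursion in hand I would condition on $\mathcal{G}_k$ and factor the conditional law of $\bar r(k+1)$. Since $(\sigma(k+1),\rho(k+1))$ equals the deterministic function $g$ evaluated at the $\mathcal{G}_k$-measurable quantities $\sigma(k),\rho(k)$,
\[
\Pr(\bar r(k+1)=\chi'\mid\mathcal{G}_k)=\onev\bigl(g(\alpha',\beta',\sigma(k),\rho(k))=(\gamma\,',\delta')\bigr)\,\Pr(r(k+1)=\alpha',\,s(k+1)=\beta'\mid\mathcal{G}_k).
\]
By the Markov property of $r$ and $s$, their mutual independence, and their independence of the initial bookkeeping data, the last factor equals $p_{\alpha\alpha'}q_{\beta\beta'}$ on the event $\{r(k)=\alpha,\,s(k)=\beta\}$. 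As the right-hand side depends on $\mathcal{G}_k$ only through $\bar r(k)=(\alpha,\beta,\gamma,\delta)$, the process $\bar r$ is a time-homogeneous Markov chain, and substituting the two cases of $g$ produces the two branches of \eqref{eq:bar_p}: when $f(\beta')=1$ we get $\onev(\alpha'=\gamma\,',\,\delta'=1)$, and when $f(\beta')=0$ we get $\onev(\gamma\,'=\gamma,\,\delta'=\lfloor\delta\rfloor_T+1)$. The only arithmetic left is to verify, for $\delta,\delta'\in[T]$, that $\delta'=\lfloor\delta\rfloor_T+1$ is equivalent to $\lfloor\delta'-\delta-1\rfloor_T=0$, which holds because $\lfloor\delta\rfloor_T\equiv\delta\pmod T$ pins down $\delta'$ uniquely in $[T]$.

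The step requiring the most care is the recursion, specifically its validity across the first observation time. For $k+1<t_0$ one has $f(s(k+1))=0$ by minimality of $t_0$, so the no-observation branch applies with $\tau(k+1)=\tau_0=\tau(k)$ and $\sigma(k+1)=\sigma_0$; at $k+1=t_0$ the observation branch takes over, correctly resetting $\sigma(t_0)=r(t_0)$ and $\rho(t_0)=1$; and for $k\geq t_0$ the standard case applies. Checking that the \emph{same} function $g$ governs the update in all three regimes is what makes the Markov reduction clean. As a by-product, the case analysis confirms $\bar r(k)\in\mathfrak X$ for every $k$, since $f(s(k))=1$ forces $\sigma(k)=r(k)$ and $\rho(k)=1$.
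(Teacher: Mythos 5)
Your proposal is correct, but its probabilistic bookkeeping differs from the paper's in a meaningful way. The paper proves the lemma ``from the inside'': it fixes an arbitrary cylinder event of the $\bar r$-process itself, $\mathcal A_k = \{\bar r(k) = \chi_k, \dotsc, \bar r(k_0) = \chi_{k_0}\}$, evaluates $\Pr(\bar r(k+1) = \chi_{k+1} \mid \mathcal A_k)$ as the ratio $\Pr(\mathcal A_{k+1})/\Pr(\mathcal A_k)$, runs the same two-case analysis on $f(\beta_{k+1})$ that you use, and concludes Markovianity and homogeneity because the resulting expression is independent of $k_0$ and $k$. You instead prove it ``from the outside'': you first isolate the deterministic one-step update $(\sigma(k+1),\rho(k+1)) = g\bigl(r(k+1), s(k+1), \sigma(k), \rho(k)\bigr)$, then condition on the finer filtration $\mathcal G_k$ generated by the driving chains $(r,s)$ and the initial bookkeeping data, pull out the $\mathcal G_k$-measurable indicator, and observe that the conditional law depends on $\mathcal G_k$ only through $\bar r(k)$; the natural-filtration Markov property then follows by the tower rule (you leave this last step implicit, but it is standard). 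Your route buys a slightly stronger statement (Markovianity with respect to the driving filtration), makes the time-homogeneity transparent since $g$ does not depend on $k$, and is the only one of the two proofs to explicitly verify both the arithmetic equivalence $\delta' = \lfloor \delta \rfloor_T + 1 \Leftrightarrow \lfloor \delta' - \delta - 1 \rfloor_T = 0$ and the consistency of the update across the three regimes $k+1 < t_0$, $k+1 = t_0$, $k+1 > t_0$, points the paper passes over silently. The paper's route is more elementary and self-contained: it needs only ratios of probabilities of finitely many events, with no conditional-expectation machinery. Both proofs rest on the same implicit assumption, which you rightly surface, that $(r,s)$ is independent of the (possibly random) initialization $(\sigma_0, \rho_0)$.
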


\begin{proof}
Let $k_0\in \mathbb{N}$, $k\geq k_0$, and $\chi_i = (\alpha_i, \beta_i,
\gamma_i, \delta_i)\in \bar{\mathfrak X}$ ($i=k_0, \dotsc, k+1$) be arbitrary.
For each $i$, define the events $\mathcal A_i$ and $\mathcal B_i$ as $\mathcal
A_i = \{\bar r(i) = \chi_i,\,\dotsc,\,\bar r(k_0) = \chi_{k_0}\}$ and $\mathcal
B_i = \{ \bar r(i) = \chi_i \} $. Under the assumption that $\mathcal A_k$ is
not the null set, we need to evaluate the conditional probability
\begin{equation}\label{eq:P(r(k+1)=...}
\Pr(\bar r(k+1) = \chi_{k+1} \mid \mathcal A_k)
=
{\Pr(\mathcal A_{k+1})}/{\Pr(\mathcal A_k)}. 
\end{equation}
Remark that, since $\mathcal A_i \neq \emptyset$, we have 
\begin{equation}\label{eq:sigma(k)=gamma_k:merged}
\sigma(k) = \gamma_{\,k}
,\quad 
\rho(k)= \delta_k. 
\end{equation}

First, assume that $f(\beta_{k+1}) = 1$. Then, by
Definition~\ref{Def:HiddenMarkov}, an observation occurs at time~$k+1$, i.e., we
have $\tau(k+1) = k+1$ and $\sigma(k+1) = r(k+1)$. This implies that
\begin{equation*}
\mathcal B_{k+1}
=
\{ r(k+1) = \alpha_{k+1}, s(k+1) = \beta_{k+1}, 
\alpha_{k+1} = \gamma_{\,k+1}, 1 = \delta_{k+1}\}. 
\end{equation*}
Therefore, since $\mathcal A_{k+1} = \mathcal A_k \cap \mathcal B_{k+1}$,
\begin{equation}\label{eq:calEk+1:first}
\mathcal A_{k+1}
=
\{\alpha_{k+1} = \gamma_{\,k+1}, \delta_{k+1} = 1\}
\cap \{r(k+1) = \alpha_{k+1}, s(k+1) = \beta_{k+1}\} \cap \mathcal A_k
\end{equation}
and hence 
\begin{equation}\label{eq:for12.1}
\Pr(\mathcal A_{k+1})
=
\onev(\alpha_{k+1} = \gamma_{\,k+1}, \delta_{k+1} = 1)
\Pr(\{r(k+1) = \alpha_{k+1}, s(k+1) = \beta_{k+1}\} \cap \mathcal A_k).
\end{equation}
The probability appearing in the last term of this equation can be computed as
\begin{equation}\label{eq:for12.2}
\begin{aligned}
&
\Pr(\{r(k+1) = \alpha_{k+1}, s(k+1) = \beta_{k+1}\} \cap \mathcal A_k)
\\
=&
\Pr(\mathcal A_k) \Pr(r(k+1) = \alpha_{k+1}, s(k+1) = \beta_{k+1} \mid \mathcal A_k) 
\\
=&
\Pr(\mathcal A_k) \Pr(r(k+1) = \alpha_{k+1}, s(k+1) = \beta_{k+1} 
\mid r(k)=\alpha_k, s(k) = \beta_k)
\\
=&
\Pr(\mathcal A_k)  {p_{\alpha_k \alpha_{k+1}} q_{\beta_k \beta_{k+1}}}, 
\end{aligned}
\end{equation}
where we have used the fact that both $r$ and $s$ are time-homogeneous Markov
chains. Thus, from equations \eqref{eq:P(r(k+1)=...}, \eqref{eq:for12.1}, and
\eqref{eq:for12.2}, we conclude that {for the case of $f(\beta_{k+1}) = 1$,}
\begin{equation}\label{eq:hidden1}
\Pr(\bar r(k+1) = \chi_{k+1} \mid \mathcal A_k) 
=
\onev(\alpha_{k+1} = \gamma_{\,k+1}, \delta_{k+1} = 1) p_{\alpha_k \alpha_{k+1}} q_{\beta_k \beta_{k+1}}.
\end{equation}

Second, consider the case where $f(\beta_{k+1}) = 0$. In this case, the Markov
mode $r$ is not observed at time $k+1$, hence, we have $\tau(k+1) = \tau(k)$
and $\sigma(k+1) = \sigma(k)$. Therefore, using equations
\eqref{eq:sigma(k)=gamma_k:merged}, in the same way as we derived
\eqref{eq:calEk+1:first}, we can show that
\begin{equation}\label{eq:calEk+1:second}
\mathcal A_{k+1}
=
\{\gamma_{\,k+1} = \gamma_{\,k},  \lfloor \delta_{k+1} - \delta_k -1 \rfloor_T = 0\} 
\cap \{r(k+1) = \alpha_{k+1}, s(k+1) = \beta_{k+1}\} \cap \mathcal A_k
\end{equation}
and hence 
\begin{equation*}
\Pr(\mathcal A_{k+1})
=
\onev(\gamma_{\,k+1} = \gamma_{\,k},  \lfloor \delta_{k+1} \!- \!\delta_k -1 \rfloor_T = 0) 
\Pr(\{r(k+1) = \alpha_{k+1}, s(k+1) = \beta_{k+1}\} \cap \mathcal A_k).
\end{equation*}
Therefore, from equations \eqref{eq:P(r(k+1)=...}, \eqref{eq:for12.2}, and
\eqref{eq:calEk+1:second}, we show that, if $f(\beta_{k+1}) = 0$, then
\begin{equation}\label{eq:hidden2}
\Pr(\bar r(k+1) = \chi_{k+1} \mid \mathcal A_k) 
=
\onev(\gamma_{\,k} = \gamma_{\,k+1},  \lfloor \delta_{k+1} - \delta_k-1 \rfloor_T = 0) 
p_{\alpha_k \alpha_{k+1}} q_{\beta_k \beta_{k+1}}.
\end{equation}

Since the probabilities \eqref{eq:hidden1} and \eqref{eq:hidden2} do not depend
on $k_0$, letting $k_0 = k$ and $k_0 = 0$ in \eqref{eq:hidden1} and
\eqref{eq:hidden2}, we obtain
\begin{equation*}
\Pr(\bar r(k+1) = \chi_{k+1} \mid \bar r(k) = \chi_k, \dotsc,  \bar r(k_0) = \chi_{k_0})
=
\Pr(\bar r(k+1) = \chi_{k+1} \mid \bar r(k) = \chi_k)
\end{equation*}
for every $k\geq 0$. This shows that $\bar r$ is a Markov chain since
$\chi_{k_0}, \dotsc, \chi_{k+1} \in \mathfrak X$ are arbitrary. Moreover, since
the probabilities \eqref{eq:hidden1} and \eqref{eq:hidden2} do not depend on
$k$, we conclude that the Markov chain $\bar r$ is time-homogeneous and its
transition probabilities are given by \eqref{eq:bar_p}.
\end{proof}

Lemma~\ref{lem:extendedMarkov} states that the closed-loop $\Sigma_K$ can be
represented as a Markov jump linear system with its mode being the extended
Markov chain~$\bar r$. This observation leads us to the following definitions.
For $\chi, \chi' \in \mathfrak X$, we denote the transition probabilities of the
Markov chain $\bar r$ (eq.~\eqref{eq:bar_p} in Lemma~\ref{lem:extendedMarkov})
by \mbox{$\bar p_{\chi \chi'} = \Pr(\bar r(k+1) = \chi' \mid \bar r(k) =
\chi)$}. Then, we introduce the Markov jump linear system
\begin{equation*}
\bar{\Sigma}_K:\begin{cases}
\begin{aligned}
\bar x(k+1) &= A_{K, \theta(k)} \bar x(k) + E_{K, \theta(k)} \bar w(k),
\\
\bar z(k)   &= C_{K, \theta(k)} \bar x(k), 
\end{aligned}
\end{cases}
\end{equation*}
where $\theta$ is the time-homogeneous Markov chain taking values in $\mathfrak
X$ whose transition probabilities are $\Pr(\theta(k+1) = \chi' \mid
\theta(k) = \chi) = \bar p_{\chi \chi'}$, and the matrices $A_{K, \chi}$,
$C_{K, \chi}$, and $E_{K, \chi}$ are defined by
\begin{equation}\label{eq:_K,chi_things}
A_{K, \chi} = A_\alpha + B_\alpha K_{\gamma, \delta}, 
\quad
C_{K, \chi} = C_\alpha + D_\alpha K_{\gamma, \delta}, 
\quad
E_{K, \chi} = E_\alpha, 
\end{equation}
for each $\chi = (\alpha, \beta, \gamma, \delta) \in \mathfrak{X}$. We
sometimes denote $\bar x$ and $\bar z$ by $\bar x(\cdot; \bar x_0, \theta_0,
\bar w)$ and $\bar z(\cdot; \bar x_0, \theta_0, \bar w)$ whenever we need to
clarify initial conditions and disturbances $\bar w$.

The next corollary of Lemma~\ref{lem:extendedMarkov} plays the key role in this
paper.

\begin{corollary}\label{cor:equivalence}
Assume that $x_0 = \bar x_0$, $w$ and $\bar w$ have the same probability
distribution, and $\bar r_0$ and $\theta_0$ have the same probability
distribution. Then, the stochastic processes $x(\cdot; x_0, \bar r_0, w)$ and
$\bar x(\cdot; \bar x_0, \theta_0, \bar w)$ have the same probability
distribution. Also, under the same assumption, the stochastic
processes~$z(\cdot; x_0, \bar r_0, w)$ and $\bar z(\cdot; \bar x_0, \theta_0,
\bar w)$ have the same probability distribution.
\end{corollary}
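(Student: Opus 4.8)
The plan is to recognize that $\Sigma_K$ is already a standard Markov jump linear system once the compound process $\bar r$ is used as the mode. First I would rewrite the closed-loop dynamics of $\Sigma_K$ using the definitions in \eqref{eq:_K,chi_things}: since $\bar r(k) = (r(k), s(k), \sigma(k), \rho(k))$, the closed-loop matrices satisfy $A_{r(k)} + B_{r(k)} K_{\sigma(k),\rho(k)} = A_{K, \bar r(k)}$, $C_{r(k)} + D_{r(k)} K_{\sigma(k),\rho(k)} = C_{K, \bar r(k)}$, and $E_{r(k)} = E_{K, \bar r(k)}$, so that $\Sigma_K$ takes the form $x(k+1) = A_{K,\bar r(k)} x(k) + E_{K,\bar r(k)} w(k)$ and $z(k) = C_{K,\bar r(k)} x(k)$. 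This recursion is syntactically identical to the one defining $\bar\Sigma_K$, with $\bar r$ playing the role of $\theta$.

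Next I would show that $\bar r$ and $\theta$ have the same law as stochastic processes. By Lemma~\ref{lem:extendedMarkov}, $\bar r$ is a time-homogeneous Markov chain whose transition probabilities are exactly the $\bar p_{\chi\chi'}$ of \eqref{eq:bar_p} used to define $\theta$. Since the finite-dimensional distributions of a Markov chain are determined by its initial distribution together with its transition matrix, and since $\bar r(0) = \bar r_0$ and $\theta_0$ share the same distribution by hypothesis, the processes $\bar r$ and $\theta$ have identical finite-dimensional distributions, hence the same law.

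The remaining step is to push this equality through the dynamics. By unrolling the recursion through induction on $k$, there is a single deterministic measurable map $g_k$ with $x(k) = g_k(x_0, \bar r(0), \dotsc, \bar r(k-1), w(0), \dotsc, w(k-1))$, and the \emph{same} map yields $\bar x(k) = g_k(\bar x_0, \theta(0), \dotsc, \theta(k-1), \bar w(0), \dotsc, \bar w(k-1))$; the output is then a further fixed, state-linear map $z(k) = C_{K,\bar r(k)} x(k)$ (resp.\ for $\bar z$). Because the law of a finite trajectory is the pushforward of the joint law of its inputs under $g_k$, it suffices to verify that the joint law of $(x_0, \{\bar r(j)\}_j, \{w(j)\}_j)$ coincides with that of $(\bar x_0, \{\theta(j)\}_j, \{\bar w(j)\}_j)$; equality of all finite-dimensional distributions then yields equality of the process laws, and the same argument applies verbatim to $z$ and $\bar z$.

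I expect the main obstacle to be precisely this last point: the hypotheses assert equality only of the \emph{marginal} laws of $w$ and $\bar w$ (and of $\bar r_0$ and $\theta_0$), whereas the pushforward argument requires equality of the \emph{joint} law of the disturbance together with its driving mode chain. The way to close this gap is to exploit the structural relationship between the two disturbances: in every situation in which the corollary is applied, $w$ and $\bar w$ are either deterministic (identically zero for stabilization, or the fixed impulse $e_i\phi$ for the $H_2$ norm), in which case each joint law is the product of the common mode law with a point mass, or they arise as the \emph{same} measurable functional of their respective mode paths (as for the $\Theta_k$\nobreakdash-adapted disturbances underlying the $H_\infty$ norm). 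In either case, the equality of mode laws established in the second step lifts to the required equality of joint input laws, completing the argument.
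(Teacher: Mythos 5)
Your proof is correct and follows essentially the same route as the paper's: rewrite $\Sigma_K$ using the compound mode $\bar r$ so that it has exactly the dynamics of $\bar\Sigma_K$, then conclude from the fact that $\bar r$ and $\theta$ share the same initial distribution and the same transition probabilities (Lemma~\ref{lem:extendedMarkov}), hence the same law. Your closing paragraph on marginal versus joint laws of the disturbance and its driving mode chain patches a genuine subtlety that the paper's two-line proof passes over silently, so your version is, if anything, more careful than the original.
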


\begin{proof}
By the assumption, the Markov chains $\bar r$ and $\theta$ have the same initial
distribution. The chains also have the same transition probabilities from
Lemma~\ref{lem:extendedMarkov} and the definition of $\theta$. Therefore, $\bar
r$ and~$\theta$ have the same probability distribution. Also, we notice that, by
the definition of the matrices in \eqref{eq:_K,chi_things}, the
system~$\Sigma_K$ admits the following representation:
\begin{equation*}
\begin{aligned}
x(k+1) &= A_{K, \bar r(k)} x(k) + E_{\bar r(k)} w(k),
\\
z(k)   &= C_{K, \bar r(k)} x(k). 
\end{aligned}
\end{equation*}
Therefore, $\Sigma_K$ has the same dynamics as $\bar \Sigma_K$. In conclusion,
the claim holds true under the assumptions stated in the corollary.
\end{proof}

Using Corollary~\ref{cor:equivalence}, we can characterize the performance
measures of the closed-loop system~$\Sigma_K$. The next proposition provides a
characterization for mean square stability:

\begin{proposition}\label{prop:stbl}
For $R = \{R_\chi \}_{\chi \in \mathfrak X} \subset \mathbb{R}^{n\times n}$ and
$\chi \in \mathfrak X$, define $\bar{\mathcal D}_\chi(R) =
\sum_{\chi'\in\mathfrak X} \bar p_{\chi' \chi} R_{\chi'} \in
\mathbb{R}^{n\times n}$. Then, the following statements are equivalent:
\begin{enumerate}
\item \label{item:stbl} $\Sigma_K$ is mean square stable; 

\item \label{item:barstbl}$\bar \Sigma_K$ is mean square stable; 

\item \label{item:stblcond} There exist positive-definite matrices~$Q_\chi
\in\mathbb{R}^{n\times n}$  for every $\chi \in \mathfrak X$, such that $Q_\chi -
\bar{\mathcal D}_\chi(A_K Q A_K^\top) > 0$.
\end{enumerate}
\end{proposition}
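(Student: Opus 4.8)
The plan is to chain the three conditions together by first transferring mean square stability between $\Sigma_K$ and the standard Markov jump linear system $\bar\Sigma_K$, and then invoking the classical second-moment characterization of stability for the latter.

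For the equivalence of \ref{item:stbl} and \ref{item:barstbl}, I would set $w\equiv\bar w\equiv 0$ and, for each admissible pair of initial data, take $x_0=\bar x_0$ together with $\theta_0$ equal to the point mass at the value of $\bar r_0$. Corollary~\ref{cor:equivalence} then guarantees that $x(\cdot; x_0,\bar r_0,0)$ and $\bar x(\cdot;\bar x_0,\theta_0,0)$ have the same distribution, so $E[\norm{x(k)}^2]=E[\norm{\bar x(k)}^2]$ for every $k$. Since $\bar r_0$ and $\theta_0$ range over the same index set $\mathfrak X$, the decaying bound of Definition~\ref{defn:ms} holds for one system with constants $C,\lambda$ exactly when it holds for the other with the same constants; this yields \ref{item:stbl} $\Leftrightarrow$ \ref{item:barstbl} immediately.

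For the equivalence of \ref{item:barstbl} and \ref{item:stblcond}, I would use the fact that, by Lemma~\ref{lem:extendedMarkov}, $\bar\Sigma_K$ is a bona fide Markov jump linear system with mode $\theta$ and transition probabilities $\bar p_{\chi\chi'}$, so the standard machinery of \cite{Costa2005} applies. The central step is to follow the mode-conditioned second moments $X_\chi(k)=E[\bar x(k)\bar x(k)^\top\onev(\theta(k)=\chi)]$. A one-step conditioning on $\theta(k)$, using the dynamics $\bar x(k+1)=A_{K,\theta(k)}\bar x(k)$ and the Markov property, produces the linear recursion $X_\chi(k+1)=\sum_{\chi'\in\mathfrak X}\bar p_{\chi'\chi}\,A_{K,\chi'}X_{\chi'}(k)A_{K,\chi'}^\top$. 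By the definitions in the proposition and in \eqref{eq:_K,chi_things}, the right-hand side is exactly $\bar{\mathcal D}_\chi(A_K X(k) A_K^\top)$, so the second moments evolve under the positive linear operator $\mathcal L\colon R\mapsto\{\bar{\mathcal D}_\chi(A_K R A_K^\top)\}_{\chi\in\mathfrak X}$ acting on tuples of symmetric matrices. Mean square stability of $\bar\Sigma_K$ is then equivalent to $\mathcal L$ being asymptotically stable, i.e.\ to its spectral radius being strictly less than one.

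To finish, I would invoke the Perron--Frobenius-type theory for positive operators preserving the cone of positive-semidefinite matrix tuples: such an operator has spectral radius below one if and only if it admits a strict Lyapunov certificate, namely a tuple $Q=\{Q_\chi\}_{\chi\in\mathfrak X}$ of positive-definite matrices with $Q_\chi-\bar{\mathcal D}_\chi(A_K Q A_K^\top)>0$ for every $\chi$. This is precisely \ref{item:stblcond}, and its equivalence to stability of $\mathcal L$ is exactly the form in which mean square stability of Markov jump linear systems is stated in \cite{Costa2005}. The step I expect to require the most care is organizational rather than analytic: verifying that the index contraction in $\bar{\mathcal D}_\chi$, which sums the incoming weights $\bar p_{\chi'\chi}$, lines up with the forward propagation of $X_\chi(k)$, so that the operator controlling stability coincides with the one appearing in \ref{item:stblcond} and not with its adjoint.
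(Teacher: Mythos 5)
Your proposal is correct and takes essentially the same approach as the paper: the equivalence of \ref{item:stbl} and \ref{item:barstbl} is transferred through Corollary~\ref{cor:equivalence} exactly as in the paper's proof, while the equivalence of \ref{item:barstbl} and \ref{item:stblcond} is the standard mean-square-stability characterization for Markov jump linear systems, which the paper simply cites from \cite{Costa2005} and you instead unfold via the conditioned second-moment recursion and the positive-operator Lyapunov certificate. Your closing worry about adjoints is resolved correctly by your own derivation: since $\bar{\mathcal D}_\chi$ sums the incoming weights $\bar p_{\chi'\chi}$, it is precisely the forward-propagation operator of $X_\chi(k)$, so the operator governing stability is the one appearing in \ref{item:stblcond}.
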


\begin{proof}
The equivalence [\ref{item:barstbl} $\Leftrightarrow$ \ref{item:stblcond}]
immediately follows from the standard theory of Markov jump linear systems (see,
e.g.,  \cite{Costa2005}). Let us prove [\ref{item:barstbl} $\Rightarrow$
\ref{item:stbl}]. Assume that $\bar \Sigma_K$ is mean square stable. In order to
show that $\Sigma_K$ is mean square stable, let us take arbitrary $x_0 \in
\mathbb{R}^n$ and $\bar r_0 \in \mathfrak X$. Then, by
Corollary~\ref{cor:equivalence} and the mean square stability of $\bar
\Sigma_K$, we can show \mbox{$E[\norm{x(k; x_0, \bar r_0, 0)}^2] = E[\norm{\bar
x(k; x_0, \bar r_0, 0)}^2] \leq C\lambda^k\norm{x_0}^2$} for some $C>0$ and
$\lambda \in [0, 1)$, which implies mean square stability of $\Sigma_K$. We
can prove [\ref{item:stbl} $\Rightarrow$ \ref{item:barstbl}] in the same way.
\end{proof}

The following proposition characterizes the $H_2$ norm of $\Sigma_K$:

\begin{proposition}\label{prop:H2}
Let $\gamma > 0$ be arbitrary. Assume that $\bar r_0$ and $\theta_0$ follow the
same distribution $\bar \mu$. Then, the following statements are equivalent:
\begin{enumerate}
\item \label{item:2<gamma} $\Sigma_K$ is mean square stable and
$\norm{\Sigma_K}_2^2 < \gamma$;
 
\item \label{item:bar2<gamma} $\bar \Sigma_K$ is mean square stable and
$\norm{\bar \Sigma_K}_2^2 < \gamma$;

\item \label{item:2cond} There exist a family of positive-definite matrices
$Q_\chi \in \mathbb{R}^{n\times n}$  for every $\chi \in \mathfrak X$, such that
$\sum_{\chi \in \mathfrak X} \tr (C_{K, \chi} Q_\chi  C_{K, \chi}^\top) <
\gamma$ and $\bar{\mathcal D}_\chi( A_K Q A_K^\top +\bar \mu  E_K  E_K^\top)  <
Q_\chi$, where the set $\bar \mu 
E_K  E_K^\top \subset \mathbb{R}^{n\times n}$ indexed by $\mathfrak X$
is defined as $(\bar \mu  E_K E_K^\top)_\chi = \bar \mu(\chi) E_{K, \chi} 
E_{K, \chi}^\top$.
\end{enumerate}
\end{proposition}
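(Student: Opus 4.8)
The plan is to mirror the two-step structure of Proposition~\ref{prop:stbl}: first transfer the $H_2$ characterization from $\Sigma_K$ to the genuine Markov jump linear system $\bar\Sigma_K$ by means of Corollary~\ref{cor:equivalence}, and then characterize $\norm{\bar\Sigma_K}_2$ through coupled Lyapunov inequalities using the standard $H_2$ theory for Markov jump linear systems~\cite{DoVal2002,Costa2005}. Accordingly, I would prove $[\ref{item:2<gamma} \Leftrightarrow \ref{item:bar2<gamma}]$ and $[\ref{item:bar2<gamma} \Leftrightarrow \ref{item:2cond}]$ separately.

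For $[\ref{item:2<gamma} \Leftrightarrow \ref{item:bar2<gamma}]$, I would note that both $H_2$ norms are defined as the \emph{same} $\bar\mu$-weighted sum of squared $\ell^2$-norms of the impulse responses $z(\cdot; 0, \chi, e_i\phi)$ and $\bar z(\cdot; 0, \chi, e_i\phi)$. Invoking Corollary~\ref{cor:equivalence} with the matched data $x_0 = \bar x_0 = 0$, $w = \bar w = e_i\phi$, and deterministic initial mode $\bar r_0 = \theta_0 = \chi$, each pair of impulse responses is identically distributed, so the two sums agree term by term and $\norm{\Sigma_K}_2 = \norm{\bar\Sigma_K}_2$. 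Combined with the mean-square-stability equivalence already proved in Proposition~\ref{prop:stbl}, this settles the first equivalence at once.

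The second equivalence $[\ref{item:bar2<gamma} \Leftrightarrow \ref{item:2cond}]$ carries the actual content. I would introduce the controllability-Gramian tuple $Q^\star = \{Q^\star_\chi\}$ defined by $Q^\star_\chi = \sum_{k\geq 1} Y_\chi(k)$, where $Y_\chi(k) = \sum_{i,\chi_0} \bar\mu(\chi_0)\, E[\bar x(k)\bar x(k)^\top \onev(\theta(k) = \chi)]$ collects the impulse-response second moments. A direct computation of the second-moment recursion gives $Y_\chi(1) = \bar{\mathcal D}_\chi(\bar\mu E_K E_K^\top)$ and $Y_\chi(k+1) = \bar{\mathcal D}_\chi(A_K Y(k) A_K^\top)$, so summing over $k$ shows that $Q^\star$ solves the coupled Lyapunov \emph{equation} $Q^\star_\chi = \bar{\mathcal D}_\chi(A_K Q^\star A_K^\top + \bar\mu E_K E_K^\top)$ and that $\norm{\bar\Sigma_K}_2^2 = \sum_\chi \tr(C_{K,\chi} Q^\star_\chi C_{K,\chi}^\top)$. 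Under mean square stability the linear part of the affine map $\mathcal T(Q)_\chi = \bar{\mathcal D}_\chi(A_K Q A_K^\top + \bar\mu E_K E_K^\top)$ has spectral radius below one, so $Q^\star$ is its unique fixed point and the iterates $\mathcal T^{(n)}(Q)$ converge to $Q^\star$ for every $Q$. For $[\ref{item:2cond} \Rightarrow \ref{item:bar2<gamma}]$, any positive-definite $Q$ satisfying the strict inequality in~\ref{item:2cond} is a strict supersolution $\mathcal T(Q) < Q$; monotonicity of $\mathcal T$ then makes the iterates decrease to $Q^\star$, whence $Q \geq Q^\star$, and the trace condition bounds $\norm{\bar\Sigma_K}_2^2$ above by $\gamma$, while $Q_\chi > \bar{\mathcal D}_\chi(A_K Q A_K^\top)$ delivers mean square stability through item~\ref{item:stblcond} of Proposition~\ref{prop:stbl}.

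The hard part will be the converse $[\ref{item:bar2<gamma} \Rightarrow \ref{item:2cond}]$, because the Gramian $Q^\star$ satisfies the Lyapunov relation only with equality and may be merely positive \emph{semi}definite, whereas item~\ref{item:2cond} demands a strictly positive-definite \emph{strict} supersolution. I would handle this by regularization: for small $\delta>0$, solve the perturbed equation $Q^{(\delta)}_\chi = \bar{\mathcal D}_\chi(A_K Q^{(\delta)} A_K^\top + \bar\mu E_K E_K^\top + \delta I)$, which is well-posed by mean square stability and yields $Q^{(\delta)} \to Q^\star$ as $\delta \to 0$. The extra term makes $Q^{(\delta)}$ a strict supersolution, and continuity of the trace keeps $\sum_\chi \tr(C_{K,\chi} Q^{(\delta)}_\chi C_{K,\chi}^\top)$ below $\gamma$ for $\delta$ small enough. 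The only subtlety is ensuring $Q^{(\delta)}_\chi > 0$ at every index $\chi$, which amounts to checking that the accumulated inflow into each $\chi$ is positive; this is routine once the regularization is fixed.
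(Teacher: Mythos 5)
Your proposal is correct in substance and, for the equivalence [\ref{item:2<gamma} $\Leftrightarrow$ \ref{item:bar2<gamma}], identical to the paper's: the paper also obtains it directly from Corollary~\ref{cor:equivalence} (applied exactly as you do, to the impulse inputs $e_i\phi$ with deterministic initial mode $\chi$), together with the stability equivalence in Proposition~\ref{prop:stbl}. Where you diverge is the second equivalence: the paper disposes of [\ref{item:bar2<gamma} $\Leftrightarrow$ \ref{item:2cond}] in one line by citing the standard $H_2$ result for Markov jump linear systems, \cite[Proposition~4]{DoVal2002}, since $\bar\Sigma_K$ is a genuine Markov jump linear system with mode $\theta$; you instead reprove that result from scratch via the controllability Gramian $Q^\star_\chi=\sum_{k\geq 1}Y_\chi(k)$, the coupled Lyapunov equation, and a monotonicity/fixed-point argument. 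Your reconstruction is essentially the classical proof, so the two routes have the same mathematical content; the paper's choice buys brevity, yours buys self-containedness and makes explicit how the weighting $\bar\mu E_K E_K^\top$ and the adjoint operator $\bar{\mathcal D}_\chi$ enter, which the reader must otherwise extract from \cite{DoVal2002,Costa2005}.

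One step of your converse direction [\ref{item:bar2<gamma} $\Rightarrow$ \ref{item:2cond}] does not work as written. In your regularized equation $Q^{(\delta)}_\chi=\bar{\mathcal D}_\chi(A_KQ^{(\delta)}A_K^\top+\bar\mu E_KE_K^\top+\delta I)$ the perturbation sits \emph{inside} the operator $\bar{\mathcal D}_\chi(R)=\sum_{\chi'\in\mathfrak X}\bar p_{\chi'\chi}R_{\chi'}$. At any state $\chi$ with no incoming transitions, i.e.\ $\bar p_{\chi'\chi}=0$ for all $\chi'$ (this can genuinely occur: by \eqref{eq:bar_p} it happens whenever the transition matrix $Q$ of $s$ has a zero column, as in the periodic-observation example), one has $\bar{\mathcal D}_\chi\equiv 0$ and hence $Q^{(\delta)}_\chi=0$ is forced; neither positive definiteness nor the strict inequality can hold there, no matter how the ``accumulated inflow'' is analyzed, so the subtlety you defer is not merely routine for \emph{this} regularization. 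The repair is the one you gesture at but should state explicitly: place the perturbation \emph{outside} the operator, i.e.\ let $Q^{(\delta)}$ be the unique fixed point of $Q\mapsto\bar{\mathcal D}_\chi(A_KQA_K^\top+\bar\mu E_KE_K^\top)+\delta I$. Then $Q^{(\delta)}_\chi\geq\delta I>0$ for every $\chi$, the coupled Lyapunov inequality is strict at every $\chi$ by construction, $Q^{(\delta)}\to Q^\star$ as $\delta\to 0$ by continuity of $(\mathcal I-\mathcal L)^{-1}$ with $\mathcal L$ the linear part (whose spectral radius is below one under mean square stability), and the trace bound $\sum_{\chi\in\mathfrak X}\tr(C_{K,\chi}Q^{(\delta)}_\chi C_{K,\chi}^\top)<\gamma$ survives for $\delta$ small enough. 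With that one-line change your argument is complete.
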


\begin{proof}
The proof of the equivalence [\ref{item:2<gamma} $\Leftrightarrow$
\ref{item:bar2<gamma}] immediately follows from Corollary~\ref{cor:equivalence}.
Also, the equivalence~[\ref{item:bar2<gamma} $\Leftrightarrow$ \ref{item:2cond}]
is a direct consequence of a standard result~\cite[Proposition~4]{DoVal2002} in
the theory of Markov jump linear systems. The details are omitted.
\end{proof}

Finally, the following proposition characterizes the $H_\infty$ norm: 

\begin{proposition}\label{prop:Hinf}
For matrices $Z = \{Z_{\chi, \chi'}  \}_{\chi, \chi' \in \mathfrak X} \subset
\mathbb{R}^{n\times n}$, define $\bar{\mathcal F}_\chi(Z) = \sum_{\chi' \in
{\mathfrak X}} \bar p_{\chi \chi'}Z_{\chi, \chi'} \in \mathbb{R}^{n\times n}$. 
Let $\gamma > 0$ be arbitrary. Consider the following statements: 
\begin{enumerate}
\item \label{item:inf<gamma}$\Sigma_K$ is mean square stable and 
$\norm{\Sigma_K}_\infty^2 < \gamma$; 
 
\item  \label{item:barinf<gamma} $\bar \Sigma_K$ is mean square stable and 
$\norm{\bar \Sigma_K}_\infty^2 < \gamma$;

\item \label{item:infcond} There exist matrices $G_\chi \in \mathbb{R}^{n\times n}$, $H_\chi \in \mathbb{R}^{n\times
n}$, $X_\chi \in \mathbb{R}^{n\times n}$, and $Z_{\chi, \chi'} \in
\mathbb{R}^{n\times n}$ ($\chi, \chi' \in \mathfrak X$) such that
\begin{gather*}
\begin{bmatrix}
G_{\chi} + G_{\chi}^\top - X_\chi& \star & \star & \star
\\
O & \gamma I & \star & \star
\\
A_{K, \chi}  G_{\chi}& E_{K, \chi}  & H_\chi + H_\chi^\top- \bar{\mathcal F}_\chi(Z) & \star
\\
C_{K, \chi} G_{\chi} & O & O & I
\end{bmatrix}
> 0,
\quad
\begin{bmatrix}
Z_{\chi, \chi'} & \star 
\\ 
H_\chi &  X_{\chi'}
\end{bmatrix} > 0, 
\end{gather*}
for all $\chi, \chi' \in \mathfrak X$.
\end{enumerate}
Then, \ref{item:inf<gamma} and \ref{item:barinf<gamma} are equivalent. Moreover,
\ref{item:infcond} implies \ref{item:inf<gamma} and \ref{item:barinf<gamma}.
\end{proposition}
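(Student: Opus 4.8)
The plan is to follow the two--step pattern already used for Propositions~\ref{prop:stbl} and~\ref{prop:H2}: transport the $H_\infty$ analysis from $\Sigma_K$ to its Markovian embedding $\bar\Sigma_K$ through Corollary~\ref{cor:equivalence}, and then show that the slack--variable LMIs in~\ref{item:infcond} are a sufficient condition for the stochastic bounded--real property of $\bar\Sigma_K$ in~\ref{item:barinf<gamma}.

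For the equivalence [\ref{item:inf<gamma} $\Leftrightarrow$ \ref{item:barinf<gamma}], I would fix $x_0 = \bar x_0 = 0$ and, for each deterministic initial mode $\chi \in \mathfrak X$ and each disturbance $w$, choose $\theta_0 = \bar r_0 = \chi$ and $\bar w = w$. Corollary~\ref{cor:equivalence} then guarantees that $z(\cdot;0,\chi,w)$ and $\bar z(\cdot;0,\chi,w)$ are identically distributed, so that $\norm{z}_2 = \norm{\bar z}_2$ while $\norm{w}_2 = \norm{\bar w}_2$. Taking the supremum over $\chi \in \mathfrak X$ and over $w \in \ell^2(\mathbb{R}^q)\setminus\{0\}$ then gives $\norm{\Sigma_K}_\infty = \norm{\bar\Sigma_K}_\infty$, and since mean square stability of the two systems is already equivalent by Proposition~\ref{prop:stbl}, statements~\ref{item:inf<gamma} and~\ref{item:barinf<gamma} coincide.

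The substantive work is the implication [\ref{item:infcond} $\Rightarrow$ \ref{item:barinf<gamma}], which I would carry out by reducing the LMIs in~\ref{item:infcond} to a canonical bounded--real lemma for Markov jump linear systems, with $X_\chi$ playing the role of the \emph{inverse} Lyapunov certificate. First, the second LMI forces $X_{\chi'} > 0$ for all $\chi'$, and hence the elementary inequality $(G_\chi - X_\chi)X_\chi^{-1}(G_\chi - X_\chi)^\top \geq 0$ shows that $G_\chi$ is nonsingular with $G_\chi X_\chi^{-1} G_\chi^\top \geq G_\chi + G_\chi^\top - X_\chi$; replacing the $(1,1)$ block of the first LMI by the larger matrix $G_\chi X_\chi^{-1}G_\chi^\top$ preserves positivity, and a congruence transformation by $\mathrm{diag}(G_\chi^{-1}, I, I, I)$ then reduces the first column to $X_\chi^{-1}$, $A_{K,\chi}$, and $C_{K,\chi}$. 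Next, a Schur complement of the second LMI yields $Z_{\chi,\chi'} > H_\chi^\top X_{\chi'}^{-1}H_\chi$, so that $\bar{\mathcal F}_\chi(Z) > H_\chi^\top \bigl(\sum_{\chi'\in\mathfrak X}\bar p_{\chi\chi'}X_{\chi'}^{-1}\bigr)H_\chi$; applying the same completion--of--squares inequality to $H_\chi$ lets me replace the $(3,3)$ block $H_\chi + H_\chi^\top - \bar{\mathcal F}_\chi(Z)$ by the larger matrix $\bigl(\sum_{\chi'\in\mathfrak X}\bar p_{\chi\chi'}X_{\chi'}^{-1}\bigr)^{-1}$, again preserving positivity. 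Two further Schur complements (eliminating this block and then the identity block coupled to $C_{K,\chi}$) should deliver precisely the standard bounded--real inequality for $\bar\Sigma_K$ expressed in the Lyapunov matrices $X_\chi^{-1}$, from which~\cite{Seiler2003,Costa2005} yield mean square stability together with $\norm{\bar\Sigma_K}_\infty^2 < \gamma$, i.e.~\ref{item:barinf<gamma}. Statement~\ref{item:inf<gamma} then follows from the equivalence just proven.

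The hard part will be the congruence--and--Schur--complement bookkeeping in this last step, and in particular matching the Markov--coupling term $\bar{\mathcal F}_\chi(Z)$ generated by the slack variables $H_\chi$ and $Z_{\chi,\chi'}$ against the forward expected--Lyapunov operator $\sum_{\chi'}\bar p_{\chi\chi'}X_{\chi'}^{-1}$, while keeping every inequality direction correct as the diagonal blocks are enlarged. Because these block--enlargement steps (and the introduction of the slack variables $G_\chi$, $H_\chi$ that ultimately render the synthesis LMIs affine in the gains) are not tight in general, I expect the converse implication to fail without an explicit reconstruction of the slack variables, which is exactly why only sufficiency is asserted in~\ref{item:infcond}.
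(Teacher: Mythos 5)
Your proof of the equivalence [\ref{item:inf<gamma} $\Leftrightarrow$ \ref{item:barinf<gamma}] has a genuine gap, and it is exactly the point the paper's proof is devoted to. The two $H_\infty$ norms are suprema over \emph{different} admissible disturbance classes: $\norm{\Sigma_K}_\infty$ is a supremum over $w\in\ell^2$, i.e.\ over processes adapted to the filtration $\Theta_k$ generated by $\{r(0),s(0),\dotsc,r(k),s(k)\}$, whereas $\norm{\bar\Sigma_K}_\infty$ is a supremum over disturbances adapted to the filtration $\bar\Theta_k$ generated by $\{\theta(0),\dotsc,\theta(k)\}$ (equivalently by $\{\bar r(0),\dotsc,\bar r(k)\}$ once $\theta$ is realized as $\bar r$). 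Writing ``choose $\bar w = w$'' silently assumes these classes coincide. One inclusion is free: $r(k)$ and $s(k)$ are components of $\bar r(k)$, so $\Theta_k\subseteq\bar\Theta_k$, every $w\in\ell^2$ is admissible for $\bar\Sigma_K$, and Corollary~\ref{cor:equivalence} then gives $\norm{\Sigma_K}_\infty\leq\norm{\bar\Sigma_K}_\infty$, i.e.\ [\ref{item:barinf<gamma} $\Rightarrow$ \ref{item:inf<gamma}]. But the direction [\ref{item:inf<gamma} $\Rightarrow$ \ref{item:barinf<gamma}] needs the reverse inclusion $\bar\Theta_k\subseteq\Theta_k$, which holds because $\sigma(k)$ and $\rho(k)$ are measurable functions of $(r(0),s(0),\dotsc,r(k),s(k))$; this measurability observation is precisely the content of the paper's argument, and without it the supremum on the $\bar\Sigma_K$ side could a priori range over a strictly larger set of disturbances. (The gap is harmless for the use made of the proposition in Theorem~\ref{thm:Hinftyt}, which only needs [\ref{item:infcond} $\Rightarrow$ \ref{item:barinf<gamma} $\Rightarrow$ \ref{item:inf<gamma}], but the proposition asserts full equivalence.)

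For [\ref{item:infcond} $\Rightarrow$ \ref{item:barinf<gamma}] you take a genuinely different route: the paper disposes of this in one line by citing \cite[Theorem~1]{Goncalves2012}, legitimate because Lemma~\ref{lem:extendedMarkov} makes $\bar\Sigma_K$ a standard Markov jump linear system with mode $\theta$. You instead rederive that result by the slack-variable argument, and your steps do go through: $X_{\chi'}>0$ from the second LMI, invertibility of $G_\chi$ from $G_\chi+G_\chi^\top>X_\chi>0$, the block enlargement $G_\chi^\top X_\chi^{-1}G_\chi\geq G_\chi+G_\chi^\top-X_\chi$ followed by congruence with $\mathrm{diag}(G_\chi^{-\top},I,I,I)$ (mind the transposes: your combination of $G_\chi X_\chi^{-1}G_\chi^\top$ with $\mathrm{diag}(G_\chi^{-1},I,I,I)$ leaves $A_{K,\chi}G_\chi G_\chi^{-\top}$ rather than $A_{K,\chi}$ in the $(3,1)$ block), then $Z_{\chi,\chi'}>H_\chi^\top X_{\chi'}^{-1}H_\chi$, hence $\bar{\mathcal F}_\chi(Z)\geq H_\chi^\top\mathcal{X}_\chi H_\chi$ and $H_\chi+H_\chi^\top-H_\chi^\top\mathcal{X}_\chi H_\chi\leq\mathcal{X}_\chi^{-1}$ with $\mathcal{X}_\chi=\sum_{\chi'}\bar p_{\chi\chi'}X_{\chi'}^{-1}$, and a final Schur complement producing the stochastic bounded-real inequality of \cite{Seiler2003} in the variables $P_\chi=X_\chi^{-1}$. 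This buys a self-contained proof (essentially reproducing how the cited theorem is itself proved) at the cost of the bookkeeping you anticipate, whereas the paper's citation is shorter but leans on external machinery. Your closing remark that these non-tight enlargements can only yield sufficiency is consistent with the proposition's one-directional claim for statement~\ref{item:infcond}.
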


\begin{proof}
The implication [\ref{item:infcond} $\Rightarrow$ \ref{item:barinf<gamma}] is a
direct consequence of \cite[Theorem 1]{Goncalves2012}. Let us prove
[\ref{item:inf<gamma} $\Leftrightarrow$ \ref{item:barinf<gamma}]. It is
sufficient to show $\norm{\Sigma_K}_\infty = \norm{\bar \Sigma_K}_\infty$. Let
$\bar{\Theta}_k$ denote the $\sigma$-algebra generated by the random variables
$\{\theta(k), \dotsc, \theta(0)\}$. Define $\bar \ell^2$ as the space of
stochastic processes $\bar f = \{\bar f(k)\}_{k=0}^\infty$ such that
$\sum_{k=0}^\infty E[\norm{\bar f(k)}^2]$ is finite and, for each $k\geq 0$,
$\bar f(k)$ is an $\mathbb{R}^n$\nobreakdash-valued and $\bar
\Theta_k$-measurable random variable. Define the norm of~$\bar f\in\bar \ell^2$
by $\norm{\bar f}_2^2= \sum_{k=0}^\infty E[\norm{\bar f(k)}^2]$. Then, the
$H_\infty$ norm of $\bar \Sigma_K$ is given by
\begin{equation*}
\norm{\bar \Sigma_K}_\infty = \sup_{\theta_0 \in
\mathfrak X} \sup_{\bar w \in \bar \ell^2 \backslash\{0\}} \frac{\norm{\bar z(\cdot; 0, \theta_0, \bar w)}_2}{\norm{\bar
w}_2}.
\end{equation*} To show that this norm equals $\norm{\Sigma_K}_\infty$, it is
sufficient to show that, if $\bar r_0 = \theta_0$, then 
\begin{equation}\label{eq:pf:prop:Hinf:}
\sup_{w \in \ell^2 \backslash\{0\}}
\frac{\norm{z(\cdot; 0, \bar r_0, w)}_2}{\norm{w}_2} = 
\sup_{\bar w \in \bar \ell^2 \backslash\{0\}} \frac{\norm{\bar z(\cdot; 0, \theta_0, \bar w)}_2}{\norm{\bar
w}_2}.
\end{equation}
By Corollary~\ref{cor:equivalence}, the only difference between both sides of
the equality \eqref{eq:pf:prop:Hinf:} is the spaces~$\ell^2$ and  $\bar \ell^2$.
In other words, to complete the proof, it is sufficient to show that $\Theta_k$,
the $\sigma$-algebra generated by the random variables $\{r(k), s(k), \dotsc,
r(0), s(0)\}$, coincides with the one generated by  $\{\bar r(k), \dotsc, \bar
r(0)\}$. This is obvious because $\sigma(k)$ and $\rho(k)$ are images of $(r(k),
s(k), r(k-1), s(k-1), \dotsc, r(0), s(0))$ under measurable functions.
\end{proof}

\section{Design of feedback gains via linear matrix inequalities}\label{sec:design}

Based on the performance characterizations presented in the previous section, we now propose a formulation based on Linear Matrix Inequalities (LMI) to design feedback control laws for stabilization, $H_2$, and $H_\infty$ control of discrete-time Markov jump linear systems under hidden-Markovian observations of the mode signals.

The next theorem provides an LMI formulation to solve the stabilization problem stated in Problem~\ref{prb:stbl}:

\begin{theorem}\label{thm:stblization}
Assume that the matrices \mbox{$R_\chi \in \mathbb{R}^{n\times n}$},
$G_{\gamma,\delta}\in \mathbb{R}^{n\times n}$, and $F_{\gamma,\delta}\in
\mathbb{R}^{m\times n}$ ($\chi = (\alpha, \beta, \gamma, \delta) \in \mathfrak
X$) satisfy the linear matrix inequality 
\begin{equation}\label{eq:lmi:stblization}
\begin{bmatrix}
R_\chi 
& 
A_\alpha  G_{\gamma, \delta} + B_{\alpha} F_{\gamma, \delta}
\\
\star
& 
G_{\gamma, \delta} + G_{\gamma, \delta}^\top - \mathcal D_\chi(R)
\end{bmatrix} > 0
\end{equation}
for every $ \chi = (\alpha, \beta, \gamma, \delta) \in \mathfrak X$. For
each $\gamma \in [N]$ and $\delta \in \oneto T$, define
\begin{equation}\label{eq:defK}
K_{\gamma, \delta} = F_{\gamma, \delta}G_{\gamma, \delta}^{-1}. 
\end{equation}
Then, the resulting closed-loop system $\Sigma_K$ is mean square stable.
\end{theorem}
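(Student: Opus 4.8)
The plan is to use the slack-variable (de Oliveira-type) elimination to turn feasibility of \eqref{eq:lmi:stblization} into the coupled Lyapunov inequality that certifies mean square stability of the extended system $\bar\Sigma_K$, and then transfer the conclusion to $\Sigma_K$ through Proposition~\ref{prop:stbl}. First I would extract two structural facts from \eqref{eq:lmi:stblization}: since the whole matrix is positive definite, so are its diagonal blocks, giving $R_\chi > 0$ and $G_{\gamma,\delta} + G_{\gamma,\delta}^\top - \mathcal D_\chi(R) > 0$. Because $\mathcal D_\chi(R)$ is a nonnegative combination of the positive-definite matrices $R_{\chi'}$, it is at least positive semidefinite, so $G_{\gamma,\delta} + G_{\gamma,\delta}^\top > 0$; in particular $G_{\gamma,\delta}$ is nonsingular, which is exactly what makes the gain $K_{\gamma,\delta} = F_{\gamma,\delta}G_{\gamma,\delta}^{-1}$ in \eqref{eq:defK} well defined.

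Substituting $F_{\gamma,\delta} = K_{\gamma,\delta}G_{\gamma,\delta}$ and using $A_{K,\chi} = A_\alpha + B_\alpha K_{\gamma,\delta}$ from \eqref{eq:_K,chi_things}, the off-diagonal block of \eqref{eq:lmi:stblization} becomes $A_{K,\chi}G_{\gamma,\delta}$. I would then eliminate $G_{\gamma,\delta}$ via the completion-of-squares bound $G_{\gamma,\delta}^\top \mathcal D_\chi(R)^{-1}G_{\gamma,\delta} \succeq G_{\gamma,\delta} + G_{\gamma,\delta}^\top - \mathcal D_\chi(R)$: replacing the lower-right block by the larger term $G_{\gamma,\delta}^\top \mathcal D_\chi(R)^{-1}G_{\gamma,\delta}$ preserves positive definiteness, after which the congruence transformation $\operatorname{diag}(I, G_{\gamma,\delta}^{-1})$ followed by a Schur complement on the block $\mathcal D_\chi(R)^{-1}$ removes $G_{\gamma,\delta}$ entirely and leaves $R_\chi - A_{K,\chi}\mathcal D_\chi(R)A_{K,\chi}^\top > 0$ for every $\chi \in \mathfrak X$.

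This family of inequalities is the coupled Lyapunov inequality associated with $\bar\Sigma_K$: feasibility with $R_\chi > 0$ forces the relevant second-moment operator to have spectral radius strictly below one, i.e. mean square stability of $\bar\Sigma_K$, which is statement~\ref{item:barstbl} of Proposition~\ref{prop:stbl}. The equivalence [\ref{item:barstbl}~$\Leftrightarrow$~\ref{item:stbl}] then yields mean square stability of $\Sigma_K$, as claimed.

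The step I expect to be most delicate is the elimination itself, which presumes $\mathcal D_\chi(R)$ is invertible. Since $\mathcal D_\chi(R) = \sum_{\chi'}\bar p_{\chi'\chi}R_{\chi'}$ weights $R_{\chi'}$ by the probabilities of transitioning \emph{into} $\chi$, it is positive definite only when $\chi$ has a predecessor under \eqref{eq:bar_p}; for a state with no predecessor the inverse does not exist, but there the eliminated inequality degenerates to $R_\chi > 0$, which already holds, and such states are transient, so they do not affect mean square stability. A second point requiring care is that, after elimination, $A_{K,\chi}$ sits at the fixed index $\chi$ rather than at the summation index appearing in statement~\ref{item:stblcond}; reconciling the derived inequality with the criterion of Proposition~\ref{prop:stbl} rests on the standard fact that the second-moment operator and its adjoint share the same spectral radius, so the two inequalities certify the same mean square stability.
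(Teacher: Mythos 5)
Your overall route is the paper's own: substitute $F_{\gamma,\delta}=K_{\gamma,\delta}G_{\gamma,\delta}$, eliminate the slack variable $G_{\gamma,\delta}$ via the bound $G^\top X^{-1}G \geq G+G^\top-X$ and a Schur complement, and finish with Proposition~\ref{prop:stbl}. The genuine gap is in your final reconciliation step. After elimination you hold $R_\chi - A_{K,\chi}\,\bar{\mathcal D}_\chi(R)\,A_{K,\chi}^\top>0$, whereas statement~\ref{item:stblcond} of Proposition~\ref{prop:stbl} requires $Q_\chi - \bar{\mathcal D}_\chi(A_KQA_K^\top)>0$, and you bridge the two by asserting that the first operator is the adjoint of the second. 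It is not. With respect to the trace inner product, the adjoint of $Q\mapsto\bigl(\bar{\mathcal D}_\chi(A_KQA_K^\top)\bigr)_{\chi\in\mathfrak X}$ is
\begin{equation*}
P\mapsto\Bigl(A_{K,\chi}^\top\Bigl(\textstyle\sum_{\chi'}\bar p_{\chi\chi'}P_{\chi'}\Bigr)A_{K,\chi}\Bigr)_{\chi\in\mathfrak X},
\end{equation*}
i.e., the sum runs over \emph{successors} and the conjugation is $A_{K,\chi}^\top(\cdot)A_{K,\chi}$; your operator sums over \emph{predecessors} with the conjugation $A_{K,\chi}(\cdot)A_{K,\chi}^\top$. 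The two spectral radii do coincide, but for a different reason: writing $\mathcal C$ for the congruence operator $(\mathcal CQ)_\chi = A_{K,\chi}Q_\chi A_{K,\chi}^\top$, the proposition's operator is $\bar{\mathcal D}\circ\mathcal C$ and yours is $\mathcal C\circ\bar{\mathcal D}$, and compositions $ST$ and $TS$ share their nonzero spectrum. Moreover, to exploit this you would need the spectral-radius characterization of mean square stability, which Proposition~\ref{prop:stbl} (a purely LMI-form statement) does not provide. So, as written, your last step follows neither from the cited proposition nor from the fact you invoke.

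The paper closes both of your ``delicate points'' at once with a single perturbation, and it is worth seeing how. Since $\mathfrak X$ is finite and \eqref{eq:lmi:stblization} is strict, there exists $\epsilon>0$ such that the LMI still holds with $Q_\chi := \bar{\mathcal D}_\chi(R)+\epsilon I$ in place of $\bar{\mathcal D}_\chi(R)$. This $Q_\chi\geq\epsilon I$ is positive definite for every $\chi$, so the elimination never meets a singular block and no case split over predecessor-less states (nor any appeal to their transience, which is itself shaky, since mean square stability quantifies over \emph{all} initial states) is needed; the elimination then yields $R_\chi > A_{K,\chi}Q_\chi A_{K,\chi}^\top$ for every $\chi$. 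Now, instead of trying to match this fixed-index inequality to the proposition, apply $\bar{\mathcal D}_\chi$ to the whole family: multiplying the inequality at index $\chi'$ by $\bar p_{\chi'\chi}\geq0$ and summing over $\chi'$ gives $\bar{\mathcal D}_\chi(R)\geq\bar{\mathcal D}_\chi(A_KQA_K^\top)$, whence
\begin{equation*}
Q_\chi = \bar{\mathcal D}_\chi(R)+\epsilon I \geq \bar{\mathcal D}_\chi(A_KQA_K^\top)+\epsilon I > \bar{\mathcal D}_\chi(A_KQA_K^\top),
\end{equation*}
which is exactly statement~\ref{item:stblcond} with the gain matrices automatically sitting at the summation index. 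If you wish to keep your formulation, you must prove the operator-theoretic facts above from scratch; adopting the $\epsilon$-argument is the one-line repair.
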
 

\begin{proof}
Assume that $R_\chi \in \mathbb{R}^{n\times n}$, {$G_{\gamma,\delta}\in
\mathbb{R}^{n\times n}$}, and {$F_{\gamma,\delta}\in \mathbb{R}^{m\times n}$}
satisfy \eqref{eq:lmi:stblization}, and define $K$ by \eqref{eq:defK}. Our proof
is based on an argument proposed in \cite{DoVal2002}. Since
$\mathfrak X$ is a finite set, there exists an $\epsilon>0$ such that $Q_\chi =
\mathcal D_\chi(R) + \epsilon I$ satisfies
\begin{equation}\label{eq1:pf:thmstblzation}
\begin{bmatrix}
R_\chi 
& 
A_{K, \chi}  G_{\gamma, \delta}
\\
\star
& 
G_{\gamma, \delta} + G_{\gamma, \delta}^\top - Q_\chi
\end{bmatrix} > 0, 
\end{equation}
where we have used $A_\alpha  G_{\gamma, \delta} + B_{\alpha} F_{\gamma, \delta}
= A_{K, \chi}G_{\gamma, \delta}$. Since this inequality implies $R_\chi > 0$, we
have $Q_\chi \geq \epsilon I > 0$. We here recall that, for a positive definite
matrix $A \in \mathbb{R}^{n\times n}$ and another matrix~$B\in
\mathbb{R}^{n\times n}$, it holds that (see, e.g., \cite{DoVal2002})
$BA^{-1}B^\top \geq B+B^\top -A$. Using this fact in
\eqref{eq1:pf:thmstblzation}, we obtain
\begin{equation}\label{eq2:pf:thmstblzation}
\begin{bmatrix}
R_\chi 
& 
A_{K, \chi}  G_{\gamma, \delta}
\\
\star
& 
G_{\gamma, \delta}^\top Q_\chi^{-1} G_{\gamma, \delta}
\end{bmatrix} > 0. 
\end{equation}
Also, from \eqref{eq1:pf:thmstblzation}, we see that $G_{\gamma, \delta} +
G_{\gamma, \delta}^\top > 0$ and therefore $G_{\gamma, \delta}$ is invertible.
Hence, we can take the Schur complement of the positive-definite matrix in
\eqref{eq2:pf:thmstblzation} with respect to $R_\chi$ to obtain $R_\chi - A_{K,
\chi} Q_\chi A_{K, \chi}^\top > 0$. Applying  to this inequality the operator
$\bar {\mathcal D}_\chi$, we obtain \mbox{$Q_\chi - \bar{\mathcal D}_\chi(A_K Q
A_K^\top) > \epsilon I > 0$}. Therefore, by Proposition~\ref{prop:stbl},
$\Sigma_K$ is mean square stable.
\end{proof}

Secondly, the next theorem provides an LMI formulation to solve the $H_2$ control problem stated in Problem~\ref{prb:H2}:

\begin{theorem}\label{thm:H2}
Let $\gamma>0$ be arbitrary. Assume that $W_\chi \in
\mathbb{R}^{\ell\times\ell}$, $R_\chi\in \mathbb{R}^{n\times n}$, $F_{\gamma,
\delta} \in \mathbb{R}^{m\times n}$, $G_{\gamma, \delta} \in \mathbb{R}^{n\times
n}$, and $\nu(\gamma, \delta) \geq 0$ ($\chi = (\alpha, \beta, \gamma, \delta)
\in \mathfrak X$) satisfy the following linear matrix inequalities
\begin{gather}
\begin{bmatrix}
R_\chi - \bar \mu(\chi) E_\alpha E_\alpha^\top & A_{\alpha} G_{\gamma, \delta} + B_{\alpha} F_{\gamma, \delta}
\\
\star                                    & G_{\gamma, \delta} + G_{\gamma, \delta}^\top - \bar{\mathcal D}_\chi(R)
\end{bmatrix} > 0, \label{eq:H2lmi2}
\\
\begin{bmatrix}
W_\chi & C_{\alpha} G_{\gamma, \delta} + D_{\alpha} F_{\gamma, \delta}
\\
\star  & G_{\gamma, \delta} + G_{\gamma, \delta}^\top - \bar{\mathcal{D}}_\chi(R)
\end{bmatrix} > 0, \label{eq:H2lmi1}
\\
\sum_{\chi\in{\mathfrak X}} \tr (W_\chi) < \gamma, \label{eq:trace<gamma}
\\
\sum_{\gamma=1}^{N}\sum_{\delta=1}^{T} \nu(\gamma, \delta) = 1, \label{eq:sumProbs=1}
\end{gather}
for every $\chi = (\alpha, \beta, \gamma, \delta) \in {\mathfrak X}$. Define the
feedback matrix $K$ by \eqref{eq:defK}. Then, the closed-loop system $\Sigma_K$
is mean square stable and satisfies $\norm{\Sigma_K}_2^2 < \gamma$.
\end{theorem}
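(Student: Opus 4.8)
The plan is to verify condition~\ref{item:2cond} of Proposition~\ref{prop:H2} by explicitly constructing the required family of positive-definite matrices $\{Q_\chi\}_{\chi\in\mathfrak X}$ out of the LMI data, closely following the argument used in the proof of Theorem~\ref{thm:stblization}. First I would define $K$ by~\eqref{eq:defK} and record the two identities $A_\alpha G_{\gamma,\delta} + B_\alpha F_{\gamma,\delta} = A_{K,\chi} G_{\gamma,\delta}$ and $C_\alpha G_{\gamma,\delta} + D_\alpha F_{\gamma,\delta} = C_{K,\chi} G_{\gamma,\delta}$, which rewrite the off-diagonal blocks of~\eqref{eq:H2lmi2} and~\eqref{eq:H2lmi1} in closed-loop form. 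Since $\mathfrak X$ is finite and both matrix inequalities are strict, I would then choose $\epsilon > 0$ small enough that $Q_\chi \coloneqq \bar{\mathcal D}_\chi(R) + \epsilon I$ keeps both inequalities valid after $\bar{\mathcal D}_\chi(R)$ is replaced by $Q_\chi$ in the $(2,2)$ blocks. Positivity of the $(1,1)$ block of~\eqref{eq:H2lmi2} gives $R_\chi > \bar\mu(\chi) E_\alpha E_\alpha^\top \geq 0$, hence $R_\chi > 0$, so $\bar{\mathcal D}_\chi(R)\geq 0$ and $Q_\chi \geq \epsilon I > 0$.

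Next I would eliminate the auxiliary variable $G_{\gamma,\delta}$, exactly as in Theorem~\ref{thm:stblization}. Because $G_{\gamma,\delta} + G_{\gamma,\delta}^\top > 0$, the matrix $G_{\gamma,\delta}$ is invertible, and the elementary bound $BA^{-1}B^\top \geq B + B^\top - A$ (with $A = Q_\chi$ and $B = G_{\gamma,\delta}$) lets me replace each $(2,2)$ block by $G_{\gamma,\delta}^\top Q_\chi^{-1} G_{\gamma,\delta}$ while preserving positive definiteness. Taking the Schur complement of each resulting matrix with respect to its $(2,2)$ block, and simplifying via $A_{K,\chi}G_{\gamma,\delta}(G_{\gamma,\delta}^\top Q_\chi^{-1} G_{\gamma,\delta})^{-1}G_{\gamma,\delta}^\top A_{K,\chi}^\top = A_{K,\chi} Q_\chi A_{K,\chi}^\top$, I would obtain the two $G$-free inequalities $R_\chi - \bar\mu(\chi) E_\alpha E_\alpha^\top - A_{K,\chi} Q_\chi A_{K,\chi}^\top > 0$ and $W_\chi - C_{K,\chi} Q_\chi C_{K,\chi}^\top > 0$, valid for every $\chi \in \mathfrak X$.

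Finally I would assemble the two conclusions of condition~\ref{item:2cond}. Recalling that $E_{K,\chi} = E_\alpha$, the first inequality reads $R_\chi > A_{K,\chi} Q_\chi A_{K,\chi}^\top + (\bar\mu E_K E_K^\top)_\chi$; applying $\bar{\mathcal D}_\chi$ to this family of inequalities and then adding $\epsilon I$ yields $Q_\chi - \bar{\mathcal D}_\chi(A_K Q A_K^\top + \bar\mu E_K E_K^\top) \geq \epsilon I > 0$, which is the Lyapunov-type inequality of~\ref{item:2cond}. The second inequality gives $\tr(W_\chi) > \tr(C_{K,\chi} Q_\chi C_{K,\chi}^\top)$, so summing over $\mathfrak X$ and combining with~\eqref{eq:trace<gamma} produces $\sum_{\chi\in\mathfrak X} \tr(C_{K,\chi} Q_\chi C_{K,\chi}^\top) < \gamma$, the trace condition of~\ref{item:2cond}. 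At this point I would invoke~\eqref{eq:sumProbs=1}, which ensures that $\nu$ is a genuine probability distribution so that the $\bar\mu$ built from $\mu_r$, $\mu_s$, and $\nu$ through~\eqref{eq:nu.lin} is itself a distribution, matching the hypothesis of Proposition~\ref{prop:H2}. With both parts of~\ref{item:2cond} established, Proposition~\ref{prop:H2} delivers mean square stability of $\Sigma_K$ together with $\norm{\Sigma_K}_2^2 < \gamma$. The main difficulty here is bookkeeping rather than conceptual: I must run the $G$-elimination and Schur-complement steps simultaneously on the two coupled LMIs, retain the strict inequality after applying the (only weakly positive) operator $\bar{\mathcal D}_\chi$ (handled by the $\epsilon$ margin), and keep in mind that the design variable $\nu$ enters~\eqref{eq:H2lmi2} only linearly, through the term $\bar\mu(\chi) E_\alpha E_\alpha^\top$, so that the formulation remains a true LMI.
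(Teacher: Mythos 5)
Your proposal is correct and follows essentially the same route as the paper's own proof: the same choice $Q_\chi = \bar{\mathcal D}_\chi(R) + \epsilon I$ exploiting finiteness of $\mathfrak X$, the same elimination of $G_{\gamma,\delta}$ via the bound $BA^{-1}B^\top \geq B + B^\top - A$, the same Schur complements yielding the two $G$-free inequalities, and the same appeal to Proposition~\ref{prop:H2}. The only nitpick is that the elementary bound should be applied with $B = G_{\gamma,\delta}^\top$ (not $B = G_{\gamma,\delta}$) to produce the block $G_{\gamma,\delta}^\top Q_\chi^{-1} G_{\gamma,\delta}$ that your subsequent (correct) Schur-complement algebra uses, but this is purely notational and does not affect the argument.
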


We remark that the (in)equalities in Theorem~\ref{thm:H2} are indeed linear
with respect to the design variables. The linearity with respect to the
matrix variables $W_\chi$, $R_\chi$, $F_{\gamma, \delta}$, and $G_{\gamma,
\delta}$ is obvious. The linearity with respect to $\nu$ follows from
\eqref{eq:nu.lin}. We also remark that the constraint~\eqref{eq:sumProbs=1}
makes $\nu$ a probability measure.

Let us prove Theorem~\ref{thm:H2}.

\begin{proof}[Proof of Theorem~\ref{thm:H2}]
Assume that $\gamma > 0$, $W_\chi \in \mathbb{R}^{\ell\times\ell}$, $R_\chi\in
\mathbb{R}^{n\times n}$, $F_{\gamma, \delta} \in \mathbb{R}^{m\times n}$,
$G_{\gamma, \delta} \in \mathbb{R}^{n\times n}$, and $\nu(\gamma, \delta)$
satisfy \mbox{\eqref{eq:H2lmi2}--\eqref{eq:sumProbs=1}}, and let us define $K$
by \eqref{eq:defK}. In the same way as in the proof of
Theorem~\ref{thm:stblization}, there exists an $\epsilon>0$ such that $Q_\chi =
\bar{\mathcal D}_\chi(R) + \epsilon I$ satisfies
\begin{gather}
\begin{bmatrix}
R_\chi - \bar\mu(\chi) E_{K, \chi}  E_{K, \chi}^\top &  A_{K, \chi} G_{\gamma, \delta}
\\
\star                                    & G_{\gamma, \delta}^\top Q_\chi^{-1} G_{\gamma, \delta}
\end{bmatrix} > 0, \label{eq:H2lmi2.new}
\\
\begin{bmatrix}
W_\chi &  C_{K, \chi} G_{\gamma, \delta}
\\
\star  & G_{\gamma, \delta}^\top Q_\chi^{-1} G_{\gamma, \delta}
\end{bmatrix} > 0. \label{eq:H2lmi2.new2}
\end{gather}
Applying Schur complement to the matrix in the left hand side of
\eqref{eq:H2lmi2.new}, we obtain \mbox{$R_\chi - \bar \mu(\chi)  E_{K, \chi}
E_{K, \chi}  -  A_{K, \chi} Q_\chi  A_{K, \chi}^\top > 0$}. Applying the
operator $\bar{\mathcal D}_\chi$ to this inequality yields \mbox{$\bar{\mathcal
D}_\chi( A_K Q  A_K^\top +\bar \mu  E_K  E_K^\top) < \bar{\mathcal D}_\chi(R) <
Q_\chi$}. In addition, from \eqref{eq:H2lmi2.new2} it follows that
\mbox{$W_\chi >  C_{K, \chi} Q_\chi  C_{K, \chi}^\top$}. Hence, we have
\mbox{$\sum_{\chi \in \mathfrak X} \tr (C_{K, \chi} Q_\chi  C_{K, \chi}^\top) <
\sum_{\chi \in \mathfrak X} \tr( W_\chi) < \gamma$} by \eqref{eq:trace<gamma}.
Therefore, by Proposition~\ref{prop:H2}, $\Sigma_K$ is mean square stable and
satisfies $\norm{\Sigma_K}_2^2 < \gamma$.
\end{proof}

Finally, the next theorem provides an LMI formulation to solve the $H_\infty$
control problem stated in Problem~\ref{prb:Hinfty}:

\begin{theorem}\label{thm:Hinftyt}
Assume that $\gamma>0$, $H_\chi \in \mathbb{R}^{n\times n}$, $X_\chi \in
\mathbb{R}^{n\times n}$, $Z_{\chi, \chi'} \in \mathbb{R}^{n\times n}$,
$G_{\gamma, \delta} \in \mathbb{R}^{n\times n}$, and $F_{\gamma,
\delta}\in\mathbb{R}^{m\times n}$ ($\chi = (\alpha, \beta, \gamma, \delta)$ and
$\chi'$ in $\mathfrak X$) satisfy the linear matrix inequalities
\begin{gather}
\begin{bmatrix}
G_{\gamma, \delta} + G_{\gamma, \delta}^\top - X_\chi& \star & \star & \star
\\
O & \gamma I & \star & \star
\\
A_{\alpha}  G_{\gamma, \delta} + B_{\alpha} F_{\gamma, \delta} & E_{\alpha}  & H_\chi + H_\chi^\top- \bar{\mathcal F}_\chi(Z) & \star
\\
C_{\alpha} G_{\gamma, \delta} + D_{\alpha} F_{\gamma, \delta}  & O & O & I
\end{bmatrix}
> 0, \label{eq:lmiinf1}
\\
\begin{bmatrix}
Z_{\chi, \chi'} & \star 
\\ 
H_\chi &  X_{\chi'}
\end{bmatrix} > 0, \label{eq:lmiinf2}
\end{gather}
for all $\chi = (\alpha, \beta, \gamma, \delta)$ and $\chi'$ in $\mathfrak X$.
Define the feedback matrix $K$ by \eqref{eq:defK}. Then, the closed-loop system
$\Sigma_K$ is mean square stable and satisfies $\norm{\Sigma_K}_\infty^2 <
\gamma$.
\end{theorem}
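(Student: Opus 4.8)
The plan is to follow the same template used for Theorems~\ref{thm:stblization} and~\ref{thm:H2}: reduce the design problem to the analysis characterization already established for the extended Markov jump linear system, here Proposition~\ref{prop:Hinf}. The key observation is that the implication [\ref{item:infcond} $\Rightarrow$ \ref{item:inf<gamma}] in Proposition~\ref{prop:Hinf} already furnishes a sufficient analysis condition whose matrix variables $G_\chi$ enter through the products $A_{K,\chi}G_\chi$ and $C_{K,\chi}G_\chi$. The LMIs~\eqref{eq:lmiinf1}--\eqref{eq:lmiinf2} are nothing but this condition after the standard linearizing change of variables $F_{\gamma,\delta} = K_{\gamma,\delta}G_{\gamma,\delta}$, specialized so that $G_\chi$ depends on $\chi=(\alpha,\beta,\gamma,\delta)$ only through $(\gamma,\delta)$. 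Consequently, unlike the previous two proofs, no $\epsilon$-perturbation or Schur-complement manipulation is required; the argument is essentially a verification that the substituted LMIs coincide with condition~\ref{item:infcond}.

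Concretely, I would first argue that each $G_{\gamma,\delta}$ is invertible, so that the gain $K_{\gamma,\delta}=F_{\gamma,\delta}G_{\gamma,\delta}^{-1}$ in~\eqref{eq:defK} is well defined. Invertibility follows because the $(2,2)$ block of~\eqref{eq:lmiinf2} yields $X_{\chi'}>0$ for every $\chi'$, while the $(1,1)$ block of~\eqref{eq:lmiinf1} gives $G_{\gamma,\delta}+G_{\gamma,\delta}^\top - X_\chi > 0$; hence $G_{\gamma,\delta}+G_{\gamma,\delta}^\top > X_\chi > 0$, and a real matrix with positive-definite symmetric part cannot annihilate any nonzero vector, so $G_{\gamma,\delta}$ is nonsingular.

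Next, for each $\chi=(\alpha,\beta,\gamma,\delta)\in\mathfrak X$ I would set $G_\chi := G_{\gamma,\delta}$ and invoke the definitions in~\eqref{eq:_K,chi_things}. Since $A_{K,\chi}=A_\alpha+B_\alpha K_{\gamma,\delta}$, $C_{K,\chi}=C_\alpha+D_\alpha K_{\gamma,\delta}$, and $E_{K,\chi}=E_\alpha$, the choice $F_{\gamma,\delta}=K_{\gamma,\delta}G_{\gamma,\delta}$ gives the identities $A_{K,\chi}G_\chi = A_\alpha G_{\gamma,\delta}+B_\alpha F_{\gamma,\delta}$, $C_{K,\chi}G_\chi = C_\alpha G_{\gamma,\delta}+D_\alpha F_{\gamma,\delta}$, and $E_{K,\chi}=E_\alpha$. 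Substituting these into the left-hand block matrix of condition~\ref{item:infcond} reproduces exactly the left-hand side of~\eqref{eq:lmiinf1}, while~\eqref{eq:lmiinf2} is literally the second inequality of~\ref{item:infcond}. Thus the assumed LMIs, together with the matrices $G_\chi$, $H_\chi$, $X_\chi$, $Z_{\chi,\chi'}$, constitute a feasible instance of condition~\ref{item:infcond}.

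Finally, I would apply the implication [\ref{item:infcond} $\Rightarrow$ \ref{item:inf<gamma}] of Proposition~\ref{prop:Hinf} to conclude that $\Sigma_K$ is mean square stable and satisfies $\norm{\Sigma_K}_\infty^2<\gamma$, as claimed. The only points requiring any care are the invertibility of $G_{\gamma,\delta}$ and the bookkeeping that the block matrices match after substitution; since Proposition~\ref{prop:Hinf} already packages the genuinely hard analysis (the hidden-Markov reduction of Lemma~\ref{lem:extendedMarkov} together with the $H_\infty$ bound borrowed from~\cite{Goncalves2012}), there is no real obstacle beyond this algebraic verification.
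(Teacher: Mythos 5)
Your proof is correct and takes essentially the same route as the paper's: substitute the change of variables $F_{\gamma,\delta}=K_{\gamma,\delta}G_{\gamma,\delta}$ into \eqref{eq:lmiinf1} so that, with $G_\chi:=G_{\gamma,\delta}$, the two LMIs become exactly condition~\ref{item:infcond} of Proposition~\ref{prop:Hinf}, and then invoke the implication [\ref{item:infcond} $\Rightarrow$ \ref{item:inf<gamma}]. Your explicit check that $G_{\gamma,\delta}$ is invertible (from $G_{\gamma,\delta}+G_{\gamma,\delta}^\top > X_\chi > 0$) is a detail the paper leaves implicit here, not a different argument.
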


\begin{proof}
Assume that the inequalities \eqref{eq:lmiinf1} and \eqref{eq:lmiinf2} are
satisfied by the matrices $H_\chi \in \mathbb{R}^{n\times n}$, $X_\chi \in
\mathbb{R}^{n\times n}$, $Z_{\chi, \chi'} \in \mathbb{R}^{n\times n}$,
$G_{\gamma, \delta} \in \mathbb{R}^{n\times n}$, and $F_{\gamma,
\delta}\in\mathbb{R}^{m\times n}$. Define $K$ by \eqref{eq:defK}. Then,
\eqref{eq:lmiinf1} implies
\begin{equation*}
\begin{bmatrix}
G_{\gamma, \delta} + G_{\gamma, \delta}^\top - X_\chi& \star & \star & \star
\\
O & \gamma I & \star & \star
\\
A_{K, \chi}  G_{\gamma, \delta}  & E_{K, \chi}  & H_\chi + H_\chi^\top- \bar{\mathcal F}_\chi(Z) & \star
\\
C_{K, \chi} G_{\gamma, \delta}   & O & O & I
\end{bmatrix}
> 0.
\end{equation*}
By this inequality and \eqref{eq:lmiinf2}, Proposition~\ref{prop:Hinf}
immediately shows that $\Sigma_K$ is mean square stable and satisfies
$\norm{\Sigma_K}_\infty^2 < \gamma$, as desired.  
\end{proof}

\section{Numerical examples}\label{sec:example}

The objective of this section is to illustrate Theorems~\ref{thm:H2} and
\ref{thm:Hinftyt} by numerical examples. We will also demonstrate how the
periodicity of the feedback gain can be used to improve the performance of the closed-loop
system~$\Sigma_K$.

\begin{example}
In this example, we consider the Markov jump linear system studied in
\cite[Example~1]{Costa2014b}. The system has two modes and its
parameters are given by
\begin{equation*}
\begin{aligned}
{\setlength{\arraycolsep}{2.5pt} A_1 = \begin{bmatrix}
0.7017	&	-1.227	&	0.3931	&	-0.6368	\\
-0.4876 &	-0.6699	&	-1.7073	&	-1.0026	\\
1.8625 	&	1.3409	&	0.2279 	&	-0.1856	\\
1.1069 	&	0.3881	&	0.6856	&	-1.0540
\end{bmatrix}},\ 
&{\setlength{\arraycolsep}{3.2pt} A_2 = \begin{bmatrix}
-0.0715	&	-0.5420	&	0.6716	&	0.6250	\\
0.2792	&	1.6342	&	-0.5081	&	-1.0473	\\
1.3733	&	0.8252	&	0.8564	&	1.5357	\\
0.1798	&	0.2308	&	0.2685	&	0.4344
\end{bmatrix}}, \\
B_1 = B_2 = \begin{bmatrix}
I_2\\O_{2\times 2}
\end{bmatrix},\ 
C_1 = C_2 = \begin{bmatrix}
I_4\\O_{2\times 4}
\end{bmatrix},\ 
&D_1 = D_2 = \begin{bmatrix}
O_{4\times 2}\\ I_2
\end{bmatrix},\ 
E_1 = E_2 = I_4, 
\\
P = \begin{bmatrix}
0.6942& 0.3058
\\
0.6942& 0.3058
\end{bmatrix},\ 
&\mu_r = 
\begin{bmatrix}
0.6942& 0.3058
\end{bmatrix}, 
\end{aligned}
\end{equation*} 
where $I_n$ and $O_{n\times m}$ denote the $n\times n$ identity matrix and the
$n\times m$ zero matrix, respectively. Notice that the mode signal~$r$ is a
sequence of independently and identically distributed random variables.

\begin{figure}[tb]
\centering
\includegraphics[width=10.3cm]{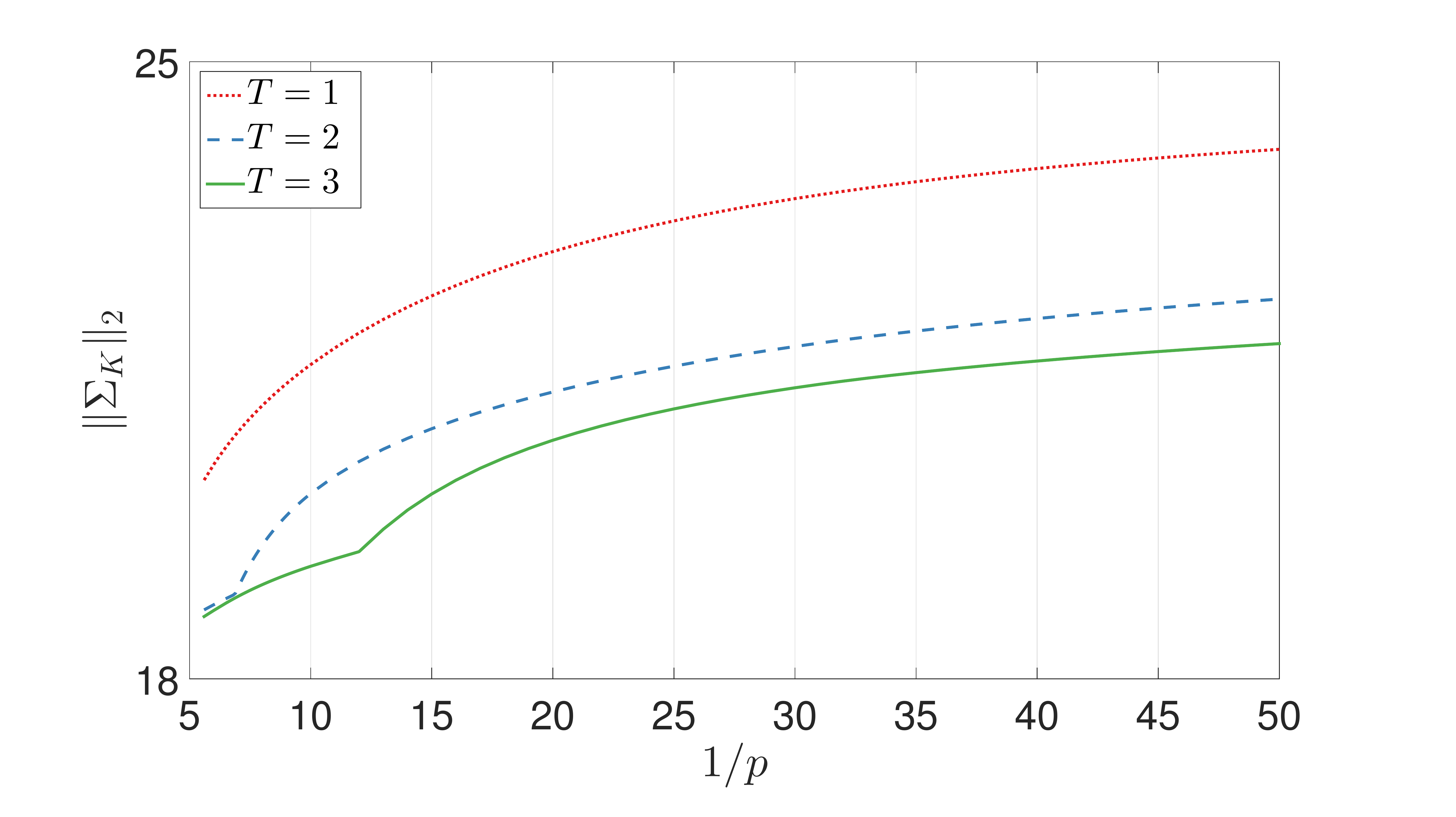}
\caption{The $H_2$ norm of the closed-loop system $\Sigma_K$ versus the  expected duration $1/p$}
\label{fig:H2}
\end{figure}

We assume that the controller observes the mode through a Gilbert-Elliot
channel, described in Example~\ref{ex:GEchannel}. For simplicity in our
presentation, we let $p=q$. Notice that, whatever value $p$ takes, the limiting
distribution of $s$ is the uniform distribution on the set $\{1, 2\}$; in other
words, the asymptotic frequency of the controller observing the mode signal $r$
is $1/2$. In addition, the expected duration of the chain $s$ staying at either
Good or Bad state depends on $p$, and is equal to $1/p$. We assume that the
initial distribution $\mu_s$ of $s$ is the uniform distribution.

We can use Theorem~\ref{thm:H2} to design stabilizing feedback gains and the
initial distribution~$\nu$ in order to achieve a small $H_2$ norm of the
closed-loop system~$\Sigma_K$ by solving the following optimization problem:
\begin{equation*}
\begin{aligned}
\minimize_{W_\chi,\,R_\chi,\,F_{\gamma, \delta},\,G_{\gamma, \delta}, \nu(\gamma,\delta)}\quad  &\gamma, 
\\
\subjectto\ \ \ \ \ \ \ \ \ \ \ \ \ &
\text{\eqref{eq:H2lmi2}--\eqref{eq:sumProbs=1}}.
\end{aligned}
\end{equation*}
Figure~\ref{fig:H2} shows the $H_2$~norms of the optimized closed-loop systems.
As expected, the larger the value of the period $T$, the smaller the attained
$H_2$ norm. We can also see that the $H_2$ norm of the closed-loop system
increases as the expected duration $1/p$ increases, although the stationary
distribution of~$s$ does not depend on $p$. We remark that this feature arising
from the Markov property of $s$ cannot be captured by the framework in
\cite{Costa2014b}, where mode observations at different time instants are
assumed to be independent events with identical probabilities.
\end{example}

\begin{example}
Consider the Markov jump linear system $\Sigma$ with the following parameters: 
\begin{equation*}
\begin{aligned}
A_1 = \begin{bmatrix}
-0.6 & -0.4
\\
-0.6 & -0.4
\end{bmatrix},\ 
A_2 = \begin{bmatrix}
-0.8& 0.4
\\
0.8 & 0.2
\end{bmatrix},\ 
&B_1 = \begin{bmatrix}
-0.3\\-0.2
\end{bmatrix},\ 
B_2 = \begin{bmatrix}
-0.2\\-0.3
\end{bmatrix},
\\
C_1 = \begin{bmatrix}
0.4&0.2
\end{bmatrix},\ 
C_2 = \begin{bmatrix}
0.1 &0.5
\end{bmatrix},\ 
&D_1 = 0.1,\ 
D_2 = -0.3,
\\
E_1 = \begin{bmatrix}
-0.3\\-0.3
\end{bmatrix},\ 
E_2 = \begin{bmatrix}
-0.2 \\-0.1
\end{bmatrix},\ 
&P = \begin{bmatrix}
0.1& 0.9
\\
0.7& 0.3
\end{bmatrix}.
\end{aligned}
\end{equation*}
From the standard theory of Markov jump linear systems~\cite{Costa2005}, one can
check that $\Sigma$ is not mean square stable when $u \equiv 0$. We use the
observation process with independent and identically distributed failures (described in Example~\ref{ex:IID}).
\begin{figure}[tb]
\centering
\includegraphics[width=10.3cm]{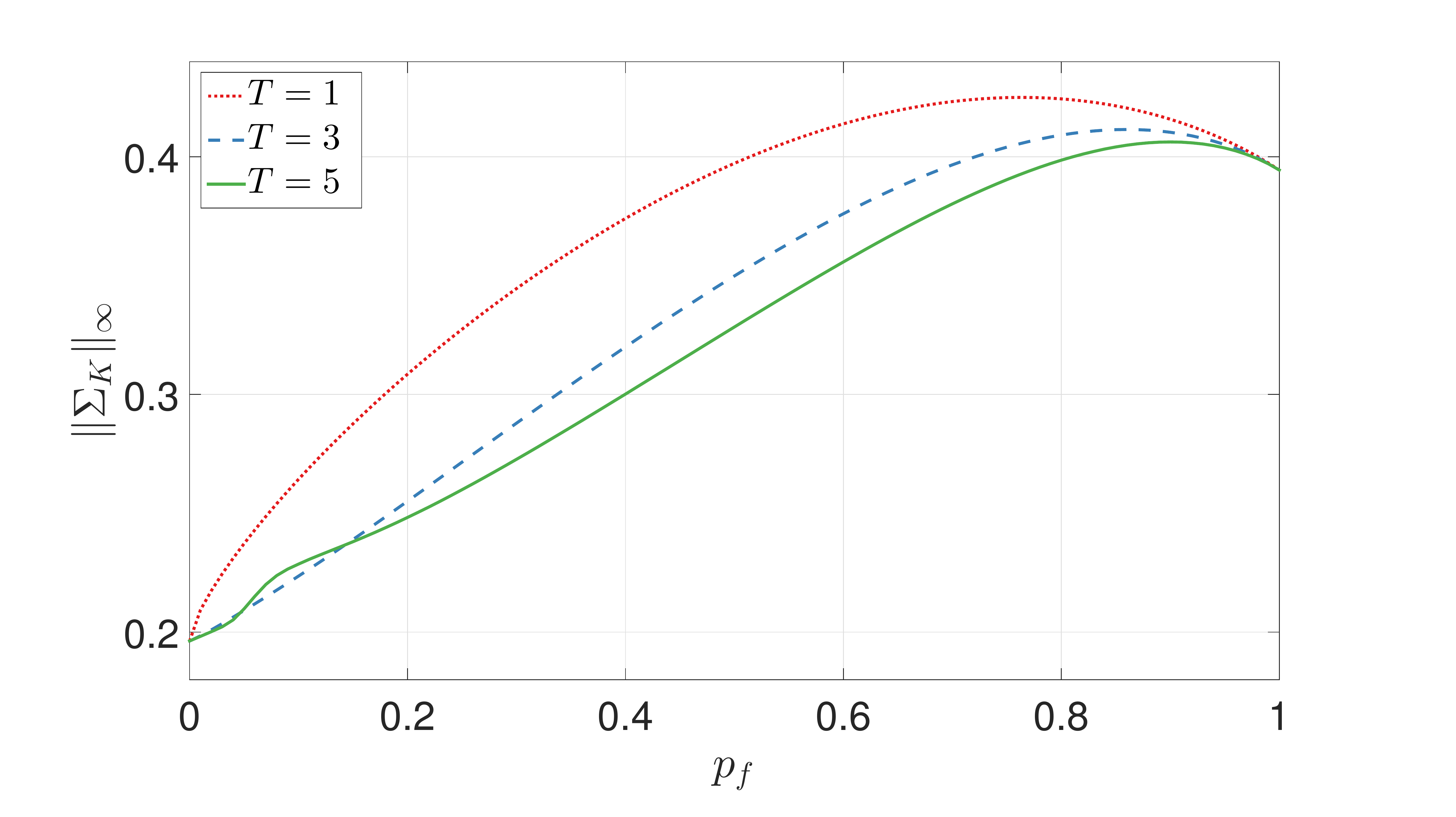}
\caption{The $H_\infty$ norm of the closed-loop system  $\Sigma_K$ versus the failure probability $p_f$}
\label{fig:Hinf}
\vspace{.75cm}
\includegraphics[width=10.3cm]{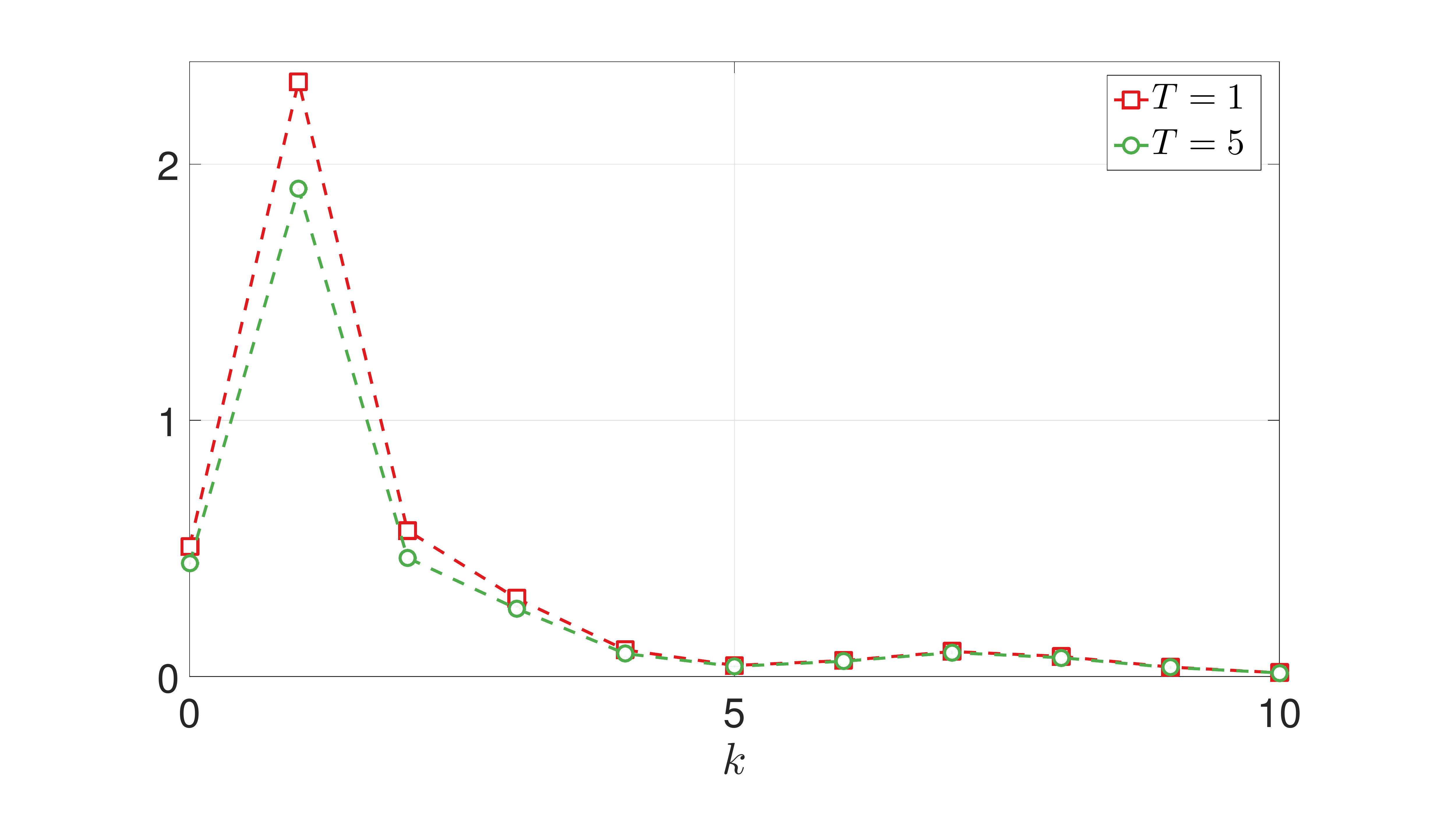}
\caption{Sample averages of $\norm{z(k)}^2$}
\label{fig:HinfExample}
\end{figure}%
In order to design stabilizing feedback gains achieving a small $H_\infty$ norm
of $\Sigma_K$, we solve the following optimization problem based on
Theorem~\ref{thm:Hinftyt}:
\begin{equation*}
\begin{aligned}
\minimize_{\substack{
H_\chi,\,X_\chi,\,Z_{\chi \chi'},\,G_{\gamma,\delta},\,F_{\gamma, \delta}
\\
}}\quad  &\gamma, 
\\
\subjectto\ \ \ \ \ \ \ \ \ \ \ \,\,&
\text{(\ref{eq:lmiinf1}) and (\ref{eq:lmiinf2})}.
\end{aligned}
\end{equation*}
Figure~\ref{fig:Hinf} shows the $H_\infty$ norms of the resulting closed-loop
systems for various values of period~$T$ and failure probability~$p_f$. As we
increase the period $T$, the $H_\infty$ norm tends to decrease. However, notice
that when $p_f$ is around $0.1$, the $H_\infty$~norm attained by the controllers
with $T=3$ are better than those with $T=5$. This phenomenon can happen because
$5$ is not an integer-multiple of $3$. Remark that, for $T=6$ instead of $T=5$,
such a phenomenon will not happen because the $H_\infty$~performance obtained by
the feedback gains~$K^{(3)} = \{K^{(3)}_{\gamma, \delta} \}_{\gamma \in [2],
\delta \in [3]}$ with period $T=3$ is attained by the feedback gains $K^{(6)}$
with period~$T=6$ given as \mbox{$K^{(6)}_{\gamma, \ell} = K^{(6)}_{\gamma,
\ell+3} = K^{(3)}_{\gamma, \ell}$} for every $\ell = 1, 2, 3$. We also remark
that, by the same reason, the performance of the resulting closed-loop system
for any integer $T$ is never worse than the performance for $T=1$ (since $1$
divides $T$).

Finally, Figure~\ref{fig:HinfExample} shows the sample averages of
$\norm{z(k)}^2$ of the closed-loop systems for $T=1$ and $T=5$. For the
computation of the sample averages, we fix $x_0 = [1 \quad 2]^\top$, $r_0 = 1$,
$s_0 = 1$, and $p_f = 0.5$. The disturbance signal is chosen as $w(k) = 2
\cos(k/2)$. We generate $300$ sample paths of $r$ and $s$. Using the sample
paths, we then generate $300$ sample paths of $z$ for $T=1$ and $T=5$,
respectively. We can see that the closed-loop system with $T=5$
attenuates the disturbance signal better than that with $T=1$.
\end{example}

\section{Conclusion}

In this paper, we have studied state-feedback control of Markov jump linear
systems with hidden-Markovian observations of the mode signals. This observation
model generalizes various relevant cases previously studied in the literature on
Markov jump linear systems, such as the cases with perfect information, no
information and cluster observations of the mode signal. We have then developed
an optimization framework, based on Linear Matrix Inequalities, to design
feedback gains for stabilization, $H_2$ and $H_\infty$~control problems.
Finally, we have illustrated the effectiveness of this optimization framework
with several numerical examples.


\end{document}